\documentclass[12pt,letterpaper]{amsart}
\usepackage[letterpaper,hmargin=1.25in,vmargin=1.5in]{geometry}
\usepackage{latexsym, amssymb, amsmath}
\usepackage{color}

\newtheorem{theorem}{Theorem}[section]
\newtheorem{lemma}[theorem]{Lemma}
\newtheorem{corollary}[theorem]{Corollary}

\newtheorem{remark}[theorem]{Remark}
\theoremstyle{definition}

\newtheorem{claim}{Claim}

\makeatletter
  
  \@addtoreset{equation}{section}
\makeatother

\begin{document}

\title[Complete steady and shrinking gradient Yamabe solitons]
{Complete steady and shrinking gradient Yamabe solitons
}

\author{Shun Maeta}
\address{Department of Mathematics, Chiba University, 1-33, Yayoicho, Inage, Chiba, 263-8522, Japan.}
\curraddr{}
\email{shun.maeta@faculty.gs.chiba-u.jp~{\em or}~shun.maeta@gmail.com}
%\thanks{The author is partially supported by the Grant-in-Aid for Scientific Research (C), No.23K03107, Japan Society for the Promotion of Science.}
%\thanks{}
\subjclass[2020]{53C21, 53C25, 53C20}
\date{}
\dedicatory{}
\keywords{steady gradient Yamabe solitons; shrinking gradient Yamabe solitons; Perelman's conjecture; rotational symmetry; Yamabe flow}
\commby{}

\begin{abstract}
In this paper, we show that complete steady gradient Yamabe solitons are either rotationally symmetric manifolds or scalar-flat manifolds.
We also classify nontrivial complete shrinking gradient Yamabe solitons.
\end{abstract}

\maketitle

% 参考文献スタイルを amsalpha に変更 これでReferences に [ ] がつく
\bibliographystyle{amsalpha}

%%%%%%%%%%%%%%%%%%%%%%%%%%%%%%%%%%%
%%%%%%%%%%%%%%%%%%%%%%%%%%%%%%%%%%%
%%%%%%%%%%%%%%%%%%%%%%%%%%%%%%%%%%%

\section{Introduction}\label{intro}

Ricci flow and Yamabe flow are powerful tools to understand Riemannian manifolds. To analyze these geometric flows, it is important to study their singularities. Ricci solitons and Yamabe solitons are self-similar solutions to the Ricci flow and the Yamabe flow, respectively, and are expected to serve as models of these singularities (cf. \cite{CLN06}). 

The Yamabe flow was introduced by Hamilton \cite{Hamilton89}. 
On closed manifolds, Hamilton proved the long-time existence of the normalized flow. Chow \cite{Chow92} proved its convergence under locally conformally flat metrics with positive Ricci curvature, and Ye \cite{Ye94} removed the curvature assumption. 
Under an initial energy restriction, Schwetlick and Struwe obtained convergence results \cite{SS03}. Brendle proved convergence for arbitrary initial energy in dimensions 3 to 5 \cite{Brendle05}, and in dimension $n\geq6$ under some assumptions on the Weyl tensor \cite{Brendle07}. 

The Yamabe flow on complete noncompact manifolds has also been studied.
First, Ma and An studied existence of the Yamabe flow on locally conformally flat manifolds under some curvature assumptions \cite{MA99}.
Daskalopoulos and Sesum studied the extinction behavior of solutions to the fast diffusion equation, and applied their results to the Yamabe flow \cite{DS08}.
Choi and Daskalopoulos studied the convergence of conformally flat solutions to the Yamabe flow to steady Yamabe solitons, and proved the existence of Type II singularities \cite{CD18}. 
Extinction profiles for solutions to the Yamabe flow were studied by Daskalopoulos, King, and Sesum \cite{DKS19}. 
Schulz developed the existence theory of the Yamabe flow \cite{Schulz19,Schulz20a,Schulz20b}.
Choi, Daskalopoulos, and King carried out detailed asymptotic analysis of Type II singularities for the Yamabe flow and determined their blow-up rates and profiles \cite{CDK21}. Global existence of the Yamabe flow on asymptotically flat manifolds was proved by Ma \cite{Ma21}. Chen and Wang proved convergence to a scalar-flat metric on asymptotically flat manifolds with positive Yamabe constant \cite{CW23}. When the Yamabe constant is nonpositive, Carron, Chen, and Wang studied the rescaled asymptotic behavior of the Yamabe flow on asymptotically flat manifolds \cite{CCW23}.

For the study of Ricci solitons, one of the famous conjectures is Perelman's conjecture, which states that ``any three-dimensional complete noncompact $\kappa$-noncollapsed gradient steady Ricci soliton with positive curvature is rotationally symmetric'' (see \cite{Perelman02}). This conjecture was proved by Brendle \cite{Brendle13}. 
Here, an $n$-dimensional Riemannian manifold $(M^n,g)$ is called {\it rotationally symmetric} if it admits an isometric cohomogeneity-one $\mathrm{SO}(n)$-action and, away from the fixed points of this action, the metric is of the form
$
g=dr^2+a(r)^2g_{\mathrm{unit}}
$
on $I\times\mathbb{S}^{n-1}$, where $I\subset\mathbb{R}$ is an interval, $g_{\mathrm{unit}}$ is the standard round metric on the unit $(n-1)$-dimensional sphere, and $a$ is positive on the regular part. At a fixed point $r_0$, we have $a(r_0)=0$. Smoothness requires $a'(r_0)=\pm1$ and a smooth odd extension of $a$ across $r_0$ (see \cite{Bryant05}).
In the warped product descriptions with warping function $|\nabla F|$, $\bar g_S$ denotes a suitably scaled round metric.
If $r$ is the distance from a critical point of $F$ and $c=F''(0)\neq0$, then $\bar g_S=c^{-2}g_{\mathrm{unit}}$.

In higher dimensions, Brendle also provided interesting results under an asymptotically cylindrical assumption (cf. \cite{Brendle14}).
However, complete resolutions are still far away.

In this paper, we consider a problem analogous to Perelman's conjecture.
First, it was shown that any compact gradient Yamabe soliton is trivial (see \cite{Hsu12}). Therefore complete noncompact gradient Yamabe solitons were studied. This problem was first considered by Daskalopoulos and Sesum. They showed that any locally conformally flat complete gradient Yamabe soliton with positive sectional curvature is rotationally symmetric (cf. \cite{DS13}).
Cao, Sun, and Zhang relaxed the curvature assumption. 
They showed that the universal cover of a nontrivial locally conformally flat complete gradient Yamabe soliton with positive scalar curvature is rotationally symmetric (cf. \cite{CSZ12}). Catino, Mantegazza, and Mazzieri \cite{CMM12} studied a broader class of conformal gradient solitons and obtained corresponding results without assuming local conformal flatness, under the assumption that the Ricci curvature is nonnegative and positive at some point.
Hsu \cite{Hsu14} obtained precise asymptotic results for the conformal factors and curvatures of rotationally symmetric gradient Yamabe solitons under suitable parameter assumptions.
Choi, Daskalopoulos, and King proved the existence of Type II singularities of complete noncompact Yamabe flows whose blow-up limits are rotationally symmetric steady gradient Yamabe solitons \cite{CDK21}.

He \cite[Theorem 1.1 and Corollary 1.2]{He11} claimed a classification result without assuming nonnegative scalar curvature.
The proof relies on Remark 1.5 of \cite{He11}, which asserts that steady gradient Yamabe solitons have nonnegative scalar curvature, citing Daskalopoulos and Sesum.
However, the nonnegativity argument in \cite[Section 3]{DS13} concerns complete rotationally symmetric metrics globally conformal to the standard metric on Euclidean space, with a smooth positive radial conformal factor.
Their argument does not prove the nonnegativity of the scalar curvature in the general warped product setting considered in \cite{He11}.
Since He uses this nonnegativity to reduce the proof to the case where the fiber has positive scalar curvature, the argument as presented in \cite{He11} requires at least the additional assumption of nonnegative scalar curvature.
Under nonnegative scalar curvature and positive scalar curvature at some point, the author also provided results on rotational symmetry for steady gradient Yamabe solitons \cite{Maeta28}. Furthermore, nontrivial nonflat complete gradient Yamabe solitons were classified in dimension two \cite{BM15} and in dimension three \cite{Maeta29}. In this paper, we consider higher dimensions. We assume that the dimension is greater than three.

In this paper, we remove all curvature assumptions and provide a classification of complete steady gradient Yamabe solitons.

\begin{theorem}\label{main}
Let $(M,g,F)$ be a complete, steady gradient Yamabe soliton. 
Then $(M,g)$ is isometric to either a manifold with zero scalar curvature or  a rotationally symmetric manifold
\[
 ([0,+\infty),dr^2)
 \times_{|\nabla F|}
 (\mathbb{S}^{n-1},\bar g_S),
\]
where $\bar g_S$ is a round metric on $\mathbb{S}^{n-1}$, with its scale chosen as above.
In the latter case, the potential function $F$ has a unique critical point.\end{theorem}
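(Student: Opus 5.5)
The plan is to use the warped product structure of gradient Yamabe solitons and then analyse a single ODE for the warping function. For a steady soliton the equation reads $\nabla^2 F = R\,g$. If $F$ is constant, then $R=\frac1n\Delta F\equiv 0$ and $(M,g)$ is scalar-flat, which is the first alternative. So assume $F$ is nonconstant.

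By the classical structure theorem for $\nabla^2F=\varphi g$ (Tashiro; Cao--Sun--Zhang), $|\nabla F|$ is constant on connected regular level sets of $F$. Writing $r$ for arclength along $\nabla F/|\nabla F|$, the metric becomes $g=dr^2+F'(r)^2\bar g_N$ on the regular part. Here $F=F(r)$, $R=F''(r)$, and $F$ has at most two critical points. If $F$ has a critical point, Tashiro's argument shows the fiber is a round sphere. The argument proceeds as follows:
\begin{itemize}
\item near the critical point $(M,g)$ is $dr^2+F'(r)^2\bar g_S$ with $F'(0)=0$ and $F''(0)=c$;
\item smoothness forces $\bar g_S=c^{-2}g_{\mathrm{unit}}$;
\item two critical points would make $M$ compact, and then it is trivial by \cite{Hsu12}.
\end{itemize}
Hence there is a unique critical point and $M=([0,\infty),dr^2)\times_{|\nabla F|}(\mathbb S^{n-1},\bar g_S)$. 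It therefore remains to treat the case with no critical point. Then $M=\mathbb R\times N$ with $g=dr^2+u(r)^2\bar g_N$, where $u=F'>0$ on all of $\mathbb R$ by completeness, and we must show $R\equiv0$.

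Set $\bar R$ for the scalar curvature of $\bar g_N$. The warped product formula for scalar curvature combined with $R=u'$ gives
\[
u'=-\frac{2(n-1)u''}{u}-\frac{(n-1)(n-2)u'^2}{u^2}+\frac{\bar R}{u^2},
\]
so $\bar R$ is constant. Multiplying by $u^{n-1}u'/(n-1)$, I expect the quantity
\[
E=u^{n-2}u'^2-\frac{\bar R}{(n-1)(n-2)}u^{n-2}
\]
to satisfy
\[
E'=-\frac{1}{n-1}u^{n-1}u'^2\le 0.
\]
So $E$ is a Lyapunov function along the whole line.

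The main obstacle is the next step: showing that a positive solution $u$ defined on all of $\mathbb R$ must have $u'\equiv0$. Once that is established, $R=u'=0$ and the equation gives $\bar R=0$, so $M$ is scalar-flat. I would split into cases by the sign of $\bar R$.
\begin{itemize}
\item If $u'\neq0$ somewhere, take an interval where $u$ is monotone.
\item Use the bound on $E$ from one side (bounded above as $r\to-\infty$ is not available, so instead bounded below as $r\to+\infty$ if $\bar R\le0$) to integrate $\int u^{n-1}u'^2<\infty$.
\item Combine this with the ODE, which expresses $u''$ as a combination of $u'$ and $u'^2/u$. This yields either finite-$r$ blow-up or $u\to0$ at finite $r$, contradicting completeness, or else $u'\to0$ in both directions.
\item In the last situation, monotonicity of $E$ between the two limits forces $E'\equiv0$, hence $u'\equiv0$.
\end{itemize}
The delicate case is $\bar R>0$, where $E$ has no sign. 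There I would first prove that $u$ cannot attain an interior maximum with $u'>0$ on one side by comparing with the ODE at critical points of $u'$ (a maximum principle for $R=u'$). Then I would show $u$ is monotone and argue as above.
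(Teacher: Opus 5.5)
Your reduction is sound: the structure theorem, the exclusion of two critical points, the ODE for $u=F'>0$ on $\mathbb{R}$, and the identity $E'=-\frac{1}{n-1}u^{n-1}u'^2$ are all correct. ($E$ is exactly the $\lambda=0$ case of the function $\eta$ the paper uses for shrinking solitons.) The gap is that the step you call the main obstacle is never carried out, and $E$ cannot carry it out on its own.

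The decisive scenario is $u'<0$ on a left half-line with $u\to+\infty$ as $r\to-\infty$. There $E\to+\infty$ as $r\to-\infty$, which is perfectly compatible with $E'\le 0$. Your bound $\int_{r_0}^{\infty}u^{n-1}u'^2\,dr<\infty$ says nothing about $r\to-\infty$. The shrinking case shows that monotonicity alone cannot exclude this scenario. There $\eta=E+\frac{\lambda}{n(n-1)}u^n$ satisfies the same dissipation identity, yet a nonconstant entire solution with $u\to+\infty$, $u'\to-\lambda$ as $r\to-\infty$ really exists (Theorem~\ref{main2}(c)).

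What rules this scenario out when $\lambda=0$ is a quantitative blow-up estimate. With $s=-r$ and $G(\rho)=\dot\rho>0$, the ODE gives
\[
\frac{d}{d\rho}\bigl(\rho^{\frac{n-2}{2}}G\bigr)=\frac{\bar R\,\rho^{\frac{n-4}{2}}}{2(n-1)G}+\frac{\rho^{\frac{n}{2}}}{2(n-1)}.
\]
So for $\bar R\ge 0$ one gets $G\ge c\rho^2$ for large $\rho$, and $\rho$ blows up at finite $s$. This contradicts $u$ being defined on all of $\mathbb{R}$.

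To reach that regime, the paper first shows that $u'<0$ somewhere forces $u\to+\infty$ as $r\to-\infty$. This uses the differentiated equation, which gives $u'''=-u'^2/(n-1)<0$ at every zero of $u''$, so $R=u'$ has no interior local minimum. The case $\bar R<0$ needs a separate concavity argument, since the first term above then has the wrong sign. None of this is in your sketch. Your trichotomy (``finite-$r$ blow-up, or $u\to 0$ at finite $r$, or $u'\to 0$ at both ends'') is asserted without a mechanism, and it ignores the possibilities $u\to 0$ or $u\to+\infty$ as $r\to\pm\infty$.

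Two further steps fail as written.

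\textbf{Equal limits of $E$.} If $u'\to 0$ at both ends, monotonicity of $E$ gives only $E(-\infty)\ge E(+\infty)$, not $E'\equiv 0$. For $\bar R>0$ the limits $-\frac{\bar R}{(n-1)(n-2)}u_\pm^{n-2}$ are compatible with any $u_-\le u_+$. What actually works is this: $u'\to 0$ with a finite positive limit $u_\pm$ forces $u''\to \bar R/(2(n-1)u_\pm)$, hence $\bar R=0$. Only then is $E=u^{n-2}u'^2\ge 0$ with both limits equal to $0$, so $E\equiv 0$.

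\textbf{The case $\bar R>0$.} ``Show $u$ is monotone and argue as above'' relies on the lower bound for $E$, which you yourself note is unavailable. The paper argues in three steps:
\begin{enumerate}
\item Every zero of $u'$ is a strict local minimum of $u$, since there $u''=\bar R/(2(n-1)u)>0$.
\item $u'<0$ anywhere is excluded by the blow-up argument above.
\item $u'>0$ everywhere is excluded as follows. On a left tail $u<\alpha$, and $u'$ must become small there. Once $0<u'\le|\beta|$ with $|\beta|\alpha^2+(n-1)(n-2)\beta^2\le \bar R/2$, the ODE gives $u''\ge \bar R/(4(n-1)\alpha)>0$. So moving left, $u'$ decreases at a definite rate and becomes negative.
\end{enumerate}
Without these ingredients the proposal does not show that a nonconstant positive entire solution is impossible. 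That is the entire content of the theorem beyond the structure theorem.
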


As a corollary, we obtain rotational symmetry of complete steady gradient Yamabe solitons that are not scalar-flat.

\begin{corollary}
Any complete steady gradient Yamabe soliton that is not scalar-flat is rotationally symmetric.
\end{corollary}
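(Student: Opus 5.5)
The corollary follows from Theorem~\ref{main}, so the plan is to prove the theorem and read off the corollary. A steady gradient Yamabe soliton satisfies $R\,g=\nabla^2F$, i.e.\ $\nabla^2 F=Rg$. First I dispose of the case $\nabla F\equiv0$: then $R\equiv0$ and $(M,g)$ is scalar-flat. So assume $F$ is nonconstant. By the standard structure theory for manifolds whose Hessian of a function is a multiple of the metric (Tashiro, Cheeger--Colding type argument), $|\nabla F|$ is constant on regular level sets of $F$. Away from critical points, with $r$ the arclength along $\nabla F/|\nabla F|$, the metric becomes a warped product $dr^2+(F'(r))^2 g_N$, where $N$ is a level hypersurface, $F=F(r)$, and $R=F''(r)$. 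The set of critical points has at most two points, and each is nondegenerate, with $F''\neq0$ there.

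Next I split by the number of critical points. With two critical points, $M$ is compact, and compact gradient Yamabe solitons are trivial by \cite{Hsu12}. That gives scalar-flat, or else contradicts $F$ being nonconstant, depending on how "trivial" is read. With exactly one critical point, $M$ is the rotationally symmetric manifold $([0,\infty),dr^2)\times_{|\nabla F|}(\mathbb S^{n-1},\bar g_S)$, where $\bar g_S=c^{-2}g_{\rm unit}$ and $c=F''(0)$ comes from the smoothness condition $a'(0)=1$. This is exactly the latter alternative in the theorem. With no critical point, $M=\mathbb R\times N$ and $g=dr^2+F'(r)^2g_N$ for an arbitrary $(n-1)$-manifold $N$ with $F'>0$. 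It remains to show that in this last case $R\equiv0$.

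In the no-critical-point case I use the scalar curvature of the warped product together with the soliton identity $R=F''$. Writing $a=F'>0$, the warped-product formula gives
\[
R=\frac{R_N}{a^2}-2(n-1)\frac{a''}{a}-(n-1)(n-2)\frac{(a')^2}{a^2}=a' .
\]
Since the left side depends only on $r$, $R_N$ must be a constant $k$. This gives the ODE $2(n-1)aa''+(n-1)(n-2)(a')^2+a^2a'=k$ on all of $\mathbb R$, with $a>0$. The task is to show that every positive solution defined on the whole line has $a'\equiv0$, which gives $R=0$. For this I would derive a first integral or monotone quantity. Substituting $a'=p(a)$ turns the ODE into a first-order equation in $a$. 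I would then case on the sign of $k$, checking behavior as $r\to\pm\infty$ (completeness forces the solution to exist for all $r$). The argument should show that if $a'$ is nonzero somewhere, then either $a$ reaches $0$ in finite $r$ or $a'$ blows up. For instance, where $a'>0$ and large, the ODE gives $a''\le -a'a/(2(n-1))+k/(2(n-1)a)$, and I would integrate backwards to force $a\to0$ at finite $r$.

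\textbf{Main obstacle.} The main obstacle is this last ODE analysis, uniformly in the sign of $k$ and especially for $k<0$. Here the earlier literature assumed $R\ge0$, which is precisely the gap in \cite{He11}. Given the theorem, the corollary is immediate: if $M$ is not scalar-flat, the first alternative is excluded, so $M$ is the rotationally symmetric model.
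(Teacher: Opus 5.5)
Your reduction is correct and coincides with the paper's: the structure theorem (Theorem~\ref{CSZ}), the warped product $dr^2+a^2g_N$ with $a=F'>0$, constancy of $R_N=k$, and your ODE, which is exactly \eqref{key} obtained from \eqref{RT3} and $R=F''$. Your last sentence, deducing the corollary from Theorem~\ref{main}, is all the paper does for the corollary itself. But you set out to prove the theorem, and your argument stops exactly where its content lies. Once the structure theorem is granted, the corollary is equivalent to the statement that every positive solution of $2(n-1)aa''+(n-1)(n-2)a'^2+a^2a'=k$ on all of $\mathbb{R}$ is constant (which then forces $k=0$ and $R=a'=0$). You only announce this (``the argument should show\dots''), so as written the proposal is a plan whose one substantive step is missing.

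The mechanism you sketch also misses the hard case. Your estimate $a''\le\frac{k-a^2a'}{2(n-1)a}$, integrated backwards, only disposes of $a'>0$ when $k\le0$. Moreover, $k<0$ is the easy case, not the main obstacle. At a zero of $a'$ one has $a''=\frac{k}{2(n-1)a}<0$, so $a'$ changes sign at most once, and a short concavity argument (the paper's Case~1) sends $a$ to $0$ in finite time.

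For $k>0$ the same identity gives $a''>0$ at zeros of $a'$. So a backward trajectory can turn around at a minimum of $a$, after which $a'<0$ on a left half-line and $a$ grows as $r\to-\infty$. Excluding this is precisely proving $R=a'\ge0$, the hypothesis earlier work had to assume. It needs a blow-up mechanism that you do not supply.

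The paper first shows $a\to+\infty$ as $r\to-\infty$. With $s=-r$, the differentiated equation gives $\dddot a=\dot a^2/(n-1)>0$ at every zero of $\ddot a$, while $\ddot a\le0$ with $a$ bounded contradicts the equation. It then writes $\dot a=G(a)$ and uses
\[
\frac{d}{da}\Big(a^{\frac{n-2}{2}}G\Big)=\frac{k}{2(n-1)G}\,a^{\frac{n-4}{2}}+\frac{1}{2(n-1)}\,a^{\frac n2}
\]
to get $\dot a\ge c\,a^2$ for large $a$. Hence $\frac{d}{ds}(-a^{-1})\ge c$ and $a$ blows up at finite $s$.

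Given $a'\ge0$, Lemma~\ref{zero} (or simply $a''>0$ at zeros of $a'$) gives $a'>0$. This in turn is excluded as follows. In $s$, the decreasing solution must reach a point where $\dot a<0$ is small (otherwise it hits $0$). After that, $\ddot a\ge\frac{k}{4(n-1)\alpha}>0$, with $\alpha$ an upper bound for $a$, which forces $\dot a$ up to $0$. Case~3 treats $k=0$ along the same lines.

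Incidentally, rewritten in $r$, the displayed identity becomes $Q'=\frac{k}{2(n-1)}a^{\frac{n-4}{2}}$ for $Q=a^{\frac{n-2}{2}}\big(a'+\frac{a^2}{(n-1)(n+2)}\big)$, valid on all of $\mathbb{R}$. This is the monotone quantity you were after. For $k=0$ it is a first integral, reducing that case to the autonomous equation $a'=Q\,a^{-\frac{n-2}{2}}-\frac{a^2}{(n-1)(n+2)}$, whose only positive solutions on $\mathbb{R}$ are constants.
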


\begin{remark}
\begin{enumerate}
\item
An elementary argument using the Yamabe soliton equation and \eqref{RT3} below shows that the rotationally symmetric manifold $(M^n,g)$ in Theorem \ref{main} is globally conformally equivalent to the Euclidean space $\mathbb{E}^n$.
\item
There are many nontrivial scalar-flat complete steady gradient Yamabe solitons.
In fact, for any constants $\alpha>0$ and $\beta\in\mathbb{R}$ and any complete Riemannian manifold $(N^{n-1}(0),g_N)$ with a scalar-flat metric, 
\[
(\mathbb{R}\times N^{n-1}(0),dr^2+\alpha^2g_N,F=\alpha r+\beta,\lambda=0)
\]
is a nontrivial scalar-flat complete steady gradient Yamabe soliton. Taking $N^{n-1}(0)$ to be a flat torus gives a non-rotationally symmetric example.
\end{enumerate}
\end{remark}

For shrinking solitons, He considered complete shrinking gradient Yamabe solitons of the form $\mathbb{R}\times_\varphi N^{n-1}$ $(n\geq3)$ and showed that there is a unique complete gradient Yamabe soliton that is not isometric to the product manifold under a suitable assumption on the scalar curvature $\bar R$ of $N$ (cf. Theorem 1.3 in \cite{He11}).
Recently the author showed that under $R\geq\lambda$ and $R>\lambda$ at some point, complete shrinking gradient Yamabe solitons are rotationally symmetric (cf. \cite{Maeta28}). Furthermore, the assumption on the scalar curvature is optimal.
Complete shrinking gradient Yamabe solitons are classified in dimension two by Bernstein and Mettler \cite{BM15} and in dimension three by the author \cite{Maeta29}.

In this paper, we classify complete shrinking gradient Yamabe solitons without any curvature assumption.

\begin{theorem}\label{main2}
Let $(M^n,g,F,\lambda)$ be a nontrivial complete shrinking gradient Yamabe soliton.
Exactly one of the following holds: 
\begin{enumerate}
\item[$(1)$]
The potential function $F$ has a unique critical point, and $(M,g)$ is isometric to
\[
 ([0,+\infty),dr^2)
 \times_{|\nabla F|}
 (\mathbb{S}^{n-1},\bar g_S),
\]
where $\bar g_S$ is a round metric on $\mathbb{S}^{n-1}$, with its scale chosen as above.

\item[$(2)$]
The potential function $F$ has no critical point, and $(M,g)$ is isometric to
\[
 (\mathbb{R},dr^2)
 \times_{|\nabla F|}
 (N^{n-1},\bar g),
\]
where $\rho(r)=F'(r)>0$ and $(N^{n-1},\bar g)$ is a complete Riemannian manifold with constant scalar curvature $\bar R$.
\end{enumerate}
Moreover, in Case $(2)$, 
\begin{enumerate}
\item[$(a)$]
When $\bar R\leq0$, no such complete solitons exist.
\item[$(b)$]
When $0<\bar R\leq (n-1)(n-2)\lambda^2$, $\rho\equiv \sqrt{\frac{\bar R}{\lambda}}$ and $M$ is isometric to a product manifold.
\item[$(c)$]
When $\bar R>(n-1)(n-2)\lambda^2$, $M$ is isometric to a product manifold with $g=dr^2+\frac{\bar R}{\lambda}\bar g$ or, for fixed $(N,\bar g)$ and $\lambda$, a unique complete gradient Yamabe soliton that is not isometric to the product manifold.
Furthermore, the nonconstant solution satisfies
 \[
 \rho(r)\to+\infty
,\quad \rho'(r)\to-\lambda\qquad (r\to-\infty),
 \]
  \[
 \rho(r)\to\sqrt{\frac{\bar R}{\lambda}}
,\quad \rho'(r)\to 0\qquad (r\to+\infty),
 \]
 and 
 \[
 R>0,\qquad
 \lim_{r\to-\infty}R(r)=0,\quad\text{and}\quad\lim_{r\to+\infty}R(r)=\lambda.
 \]
\end{enumerate}
\end{theorem}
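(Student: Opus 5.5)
The plan is to start from the soliton equation $\nabla\nabla F=(R-\lambda)g$ with $\lambda>0$. This sign convention is the one compatible with the product case $\rho\equiv\sqrt{\bar R/\lambda}$, where $R=\bar R/\rho^2=\lambda$. Since the Hessian of $F$ is a multiple of $g$, $|\nabla F|$ is constant on connected components of regular level sets. By the classical Tashiro-type argument, on the regular part the metric splits as $dr^2+F'(r)^2\bar g$, where $r$ is a signed distance along the integral curves of $\nabla F/|\nabla F|$.

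\textbf{Step 1: critical points of $F$.} At a critical point $p$ the equation forces $\nabla\nabla F(p)=(R(p)-\lambda)g$. Nontriviality together with the ODE $F''=R-\lambda$ along geodesics through $p$ shows $F''(p)\neq 0$, so critical points are nondegenerate and isolated. Two critical points would close up the warped product into a compact manifold. That case is excluded because compact gradient Yamabe solitons are trivial \cite{Hsu12}. So $F$ has at most one critical point.

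With exactly one critical point, $N$ is a small geodesic sphere around it and the smoothness conditions at $r=0$ quoted in the introduction give case $(1)$ with $\bar g_S=c^{-2}g_{\mathrm{unit}}$. With no critical point, completeness makes $r$ range over all of $\mathbb R$, and after orienting we may take $\rho=F'>0$, giving case $(2)$. In case $(2)$, $R$ is a function of $r$ alone, so the warped product formula
\[
R=\frac{\bar R}{\rho^2}-2(n-1)\frac{\rho''}{\rho}-(n-1)(n-2)\frac{(\rho')^2}{\rho^2}
\]
forces $\bar R$ to be constant.

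\textbf{Step 2: reduction to an ODE.} Combining $\rho'=R-\lambda$ with the formula above gives the autonomous ODE
\[
2(n-1)\rho\rho''+(n-1)(n-2)(\rho')^2+\rho^2(\rho'+\lambda)=\bar R .
\]
I would pass to the phase plane $(\rho,\rho')$, or equivalently write $\rho'=P(\rho)$ on monotone pieces. The only equilibrium is $\rho_0=\sqrt{\bar R/\lambda}$, and it exists only if $\bar R>0$.

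\textbf{Step 3: the three cases of $\bar R$.}

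For $(a)$, $\bar R\le0$, the ODE gives $2(n-1)\rho\rho''\le-\rho^2(\rho'+\lambda)-(n-1)(n-2)(\rho')^2$. I would show that $\rho$ is eventually concave with slope bounded away from zero, so $\rho$ hits zero in finite $r$ (forward or backward). This contradicts $\rho>0$ on all of $\mathbb R$.

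For $\bar R>0$, linearizing at $\rho_0$ gives characteristic equation
\[
2(n-1)\rho_0\mu^2+\rho_0^2\mu=0 ,
\]
which has one zero root. A careful expansion to second order, or a Lyapunov-type quantity, is needed to determine the center-manifold behaviour, and the threshold $(n-1)(n-2)\lambda^2$ should appear here. Concretely, I would use the blow-up variables $u=\rho'$ and $s=\log\rho$ to analyze behaviour as $\rho\to\infty$, where the ODE asymptotically forces $\rho'\to-\lambda$.

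For $(b)$, the goal is to show that every orbit other than the equilibrium leaves every compact set of $\{\rho>0\}$ in both time directions incompatibly. That is, it either reaches $\rho=0$ or blows up in finite $r$. Hence only $\rho\equiv\rho_0$ is complete.

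For $(c)$, the goal is to show that exactly one orbit (up to $r$-translation) connects the point at infinity $(\rho,\rho')=(\infty,-\lambda)$ as $r\to-\infty$ to the equilibrium as $r\to+\infty$. The asymptotic statements then follow directly. Positivity of $R=\rho'+\lambda$ follows because $\rho'>-\lambda$ along the orbit, which I would prove by a maximum-principle argument on the ODE for $\rho'$.

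\textbf{Main obstacle.} I expect the main obstacle to be the global phase-plane analysis in $(b)$ and $(c)$. This covers the degenerate (center-manifold) equilibrium, the exact threshold $(n-1)(n-2)\lambda^2$, and the uniqueness of the nonconstant complete orbit. I would first try to find a monotone first integral or Lyapunov function for the ODE, and otherwise fall back on a shooting argument in the parameter $\rho'(r_0)$ at a level $\rho(r_0)=\rho_0$.
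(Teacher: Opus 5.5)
There is a genuine gap, and it starts with a computational error that misdirects your whole plan for $\bar R>0$. Your linearization at $(\rho_0,0)$ drops the contribution of $\lambda\rho^2=\bar R+2\lambda\rho_0u+\lambda u^2$, where $\rho=\rho_0+u$. The correct linearized equation is $2(n-1)u''+\rho_0u'+2\lambda u=0$, and both roots of $2(n-1)\mu^2+\rho_0\mu+2\lambda=0$ have negative real part. So $(\rho_0,0)$ is a hyperbolic sink for every $\bar R>0$. There is no center manifold, and the threshold $(n-1)(n-2)\lambda^2$ does not appear there. Your goal for (b), that every non-equilibrium orbit escapes, is false in forward time: an open set of orbits converges to $(\rho_0,0)$ as $r\to+\infty$. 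For the same reason, shooting in $\rho'(r_0)$ at the level $\rho=\rho_0$ cannot single out a unique orbit. Nonexistence in (b), the threshold, and uniqueness in (c) are all decided as $r\to-\infty$, at the point at infinity $(\rho,\rho')=(\infty,-\lambda)$. That is exactly the part you leave as ``the main obstacle'' without an argument.

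The paper supplies four missing ingredients.

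First, it proves $\rho'>-\lambda$ on all of $\mathbb{R}$, hence $R>0$. A maximum-principle argument at $\rho'=-\lambda$, where $2(n-1)\rho\rho''=\bar R_\lambda:=\bar R-(n-1)(n-2)\lambda^2$, only gives forward invariance. You must also rule out $\rho'<-\lambda$ on a half-line $(-\infty,r_2)$. The paper notes that zeros of $\rho''$ in $\{\rho'<-\lambda\}$ have $\rho'''=-\rho'(\lambda+\rho')/(n-1)<0$. Then, in $s=-r$, it shows $G=\dot\rho\gtrsim\rho^2$, which forces blow-up in finite $s$.

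Second, it uses the Lyapunov function $\eta=\rho^{n-2}\rho'^2+\frac{\lambda}{n(n-1)}\rho^n-\frac{\bar R}{(n-1)(n-2)}\rho^{n-2}$, which satisfies $\eta'=-\rho^{n-1}\rho'^2/(n-1)$. With it, every nonconstant complete solution has $\rho\to\infty$ and $\rho'\to-\lambda$ as $r\to-\infty$, and $(\rho,\rho')\to(\rho_0,0)$ as $r\to+\infty$.

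Third, the threshold comes from viewing $G=-\rho'\in(0,\lambda)$ as a function of large $\rho$:
\[
\frac{dG}{d\rho}=\frac{\bar R_\lambda+(\lambda-G)\bigl((n-1)(n-2)(\lambda+G)-\rho^2\bigr)}{2(n-1)\rho G}.
\]
When $\bar R_\lambda\le0$ this is negative for large $\rho$, which contradicts $G<\lambda$, $G\to\lambda$. This gives (b).

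Fourth, for $\bar R_\lambda>0$, existence comes from the trapping curves $\rho'=-\lambda+\gamma\rho^{-2}$ with $\gamma=\bar R_\lambda/2$ and $\gamma=2\bar R_\lambda$, together with a limit of solutions started at $\rho=L\to\infty$. Uniqueness holds because, writing two such orbits as $\rho'=p_i(\rho)$, the difference $P=p_1-p_2$ satisfies $dP/d\rho=A(\rho)P$ with $A(\rho)\sim\rho/(2(n-1)\lambda)$. A nonzero $P$ would then grow like $e^{c\rho^2/2}$ instead of tending to $0$.

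Without these steps, (b), (c), the asymptotics and $R>0$ remain unproved.
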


\begin{remark}
In Case (2), a nonconstant positive warping function on $\mathbb{R}$ exists if and only if
\[
\bar R>(n-1)(n-2)\lambda^2.
\]
\end{remark}
%%%%%%%%%%%%%%%%%%%%%%%%%%%

\section{Preliminaries and lemmas}\label{sectionP}

An $n$-dimensional Riemannian manifold $(M^n,g)$ is called a gradient Yamabe soliton if there exist a smooth function $F$ on $M$ and a constant $\lambda\in \mathbb{R}$ such that 
$\nabla \nabla F=(R-\lambda)g,$
where $R$ is the scalar curvature of $M$.
We write $(M^n,g,F,\lambda)$ for such a soliton.
If $F$ is constant, then $M$ is called trivial.
If $\lambda>0$, $\lambda=0$, or $\lambda<0$, then the Yamabe soliton is called shrinking, steady, or expanding, respectively.
The Riemannian curvature tensor is defined by 
\[
R(X,Y)Z=-\nabla_X\nabla_YZ+\nabla_Y\nabla_XZ+\nabla_{[X,Y]}Z,
\]
for $X,Y,Z\in \mathfrak{X}(M)$, where $\nabla$ is the Levi-Civita connection of $M$. The Ricci tensor $R_{ij}$ is defined by 
$R_{ij}=g^{pq}R_{ipjq},$ where $R_{ijk\ell}=g(R({\partial_i,\partial_j})\partial_k,\partial_\ell).$

Cao, Sun, and Zhang established the following structure theorem for Yamabe solitons (see also \cite{Tashiro65} and \cite{CMM12}).

\begin{theorem}[\cite{Tashiro65}, \cite{CSZ12}, \cite{CMM12}, \cite{Maeta21}, \cite{Petersen16}]\label{CSZ}
Let $(M^n,g,F,\lambda)$ be a nontrivial complete gradient Yamabe soliton. Then $|\nabla F|^2$ is constant on every regular level hypersurface of $F$, and exactly one of the following holds:
\begin{enumerate}
\item[$(1)$]
The potential function $F$ has a unique critical point, and $(M,g)$ is isometric to
\[
 ([0,+\infty),dr^2)
 \times_{|\nabla F|}
 (\mathbb{S}^{n-1},\bar g_S),
\]
where $\bar g_S$ is a round metric on $\mathbb{S}^{n-1}$, with its scale chosen as above.

\item[$(2)$]
The potential function $F$ has no critical point, and $(M,g)$ is isometric to
\[
 (\mathbb{R},dr^2)
 \times_{|\nabla F|}
 (N^{n-1},\bar g),
\]
where $(N^{n-1},\bar g)$ has constant scalar curvature $\bar R$.
\end{enumerate}
Moreover, in Case $(2)$, if $R\geq0$ on $M$, then either $\bar R>0$, or $R=\bar R=0$ and $|\nabla F|\equiv a$ for some constant $a>0$.
In the latter case, $(M,g)$ is isometric to the Riemannian product
\[
(\mathbb{R},dr^2)\times(N^{n-1},a^2\bar g).
\]
\end{theorem}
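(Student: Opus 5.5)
The plan is to follow Tashiro's analysis of concircular fields. First I show that $|\nabla F|^2$ is constant on regular level sets. Differentiating and using the soliton equation gives
\[
d|\nabla F|^2=2\nabla\nabla F(\nabla F,\cdot)=2(R-\lambda)\,dF .
\]
Hence $d|\nabla F|^2\wedge dF=0$, so $|\nabla F|^2$ is locally a function of $F$. Away from critical points, let $\nu=\nabla F/|\nabla F|$. Then $\nabla_\nu\nu=0$, because $\nabla\nabla F(\nu,\cdot)$ is proportional to $dF$. So the integral curves of $\nu$ are unit-speed geodesics, and with $r$ their arclength parameter $F=F(r)$. The second fundamental form of a level set is $\nabla\nabla F/|\nabla F|$ restricted to the level set, which is $(R-\lambda)/F'\cdot g$. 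Thus the level sets are totally umbilic, and integrating the first-variation equation gives the local warped product form $g=dr^2+F'(r)^2\bar g$, with warping function $\rho:=F'=|\nabla F|$ (after fixing orientation). Consequently $R-\lambda=F''=\rho'$.

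Next I use the warped product curvature formula
\[
R=\frac{\bar R}{\rho^2}-\frac{2(n-1)\rho''}{\rho}-\frac{(n-1)(n-2)\rho'^2}{\rho^2}.
\]
Since $R=\rho'+\lambda$ depends only on $r$, the fibre scalar curvature $\bar R$ is constant, and
\[
2(n-1)\rho\rho''+(n-1)(n-2)\rho'^2+(\rho'+\lambda)\rho^2=\bar R .
\]

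To analyze critical points, take a critical point $p$ and a unit-speed geodesic $\gamma$ from $p$. Then $(F\circ\gamma)''=(R-\lambda)\circ\gamma$, and $F$ is a function of $d(p,\cdot)$ near $p$. Applying the ODE above along radial geodesics, smoothness at $p$ forces $\rho(0)=0$ and $\rho'(0)=F''(0)\neq0$. The point is that $\rho(0)=\rho'(0)=0$ would force, via uniqueness for the ODE (desingularized after multiplying through), $\rho\equiv0$, so that $F$ would be trivial. Hence every critical point is nondegenerate and isolated, and the level sets near it are geodesic spheres, so the fibre is a round $\mathbb{S}^{n-1}$ with $\bar g_S=c^{-2}g_{\mathrm{unit}}$.

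Completeness together with the flow of $\nu$ then gives the global picture. If there is no critical point, the gradient flow identifies $M$ with $\mathbb{R}\times N$. If there is one critical point, we get $[0,\infty)\times_\rho\mathbb{S}^{n-1}$. Two critical points would close $M$ up into a compact manifold, which is excluded since compact gradient Yamabe solitons are trivial by \cite{Hsu12}. The step I expect to be the main obstacle is this global part: showing that the flow lines of $\nu$ foliate all of $M$ and that the regular level sets are connected. The local computations are routine by comparison.

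For the final assertion in Case (2), suppose $R\ge0$ and $\bar R\le0$. The relation
\[
2(n-1)\rho\rho''=\bar R-(n-1)(n-2)\rho'^2-R\rho^2\le0
\]
shows that $\rho$ is a positive concave function on all of $\mathbb{R}$, hence constant, say $\rho\equiv a>0$. Then $R=\bar R/a^2\le0$, and combined with $R\ge0$ this yields $R=\bar R=0$. The metric is therefore the product $dr^2+a^2\bar g$.
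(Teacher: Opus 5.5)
The paper gives no proof of this theorem; it quotes it from Tashiro and Cao--Sun--Zhang, and your outline is exactly that standard route. Your proof of the final clause is correct and complete. If $R\ge0\ge\bar R$, then $2(n-1)\rho\rho''=\bar R-(n-1)(n-2)\rho'^2-R\rho^2\le0$, so $\rho$ is a positive concave function on $\mathbb{R}$ and hence constant; then $R=\bar R/a^2$ forces $R=\bar R=0$.

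The analysis at a critical point $p$ has a gap, in two places.

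\textbf{Radial symmetry.} The identity $(F\circ\gamma)''=(R-\lambda)\circ\gamma$ does not by itself make $F$ radial, because $R\circ\gamma$ could a priori depend on the direction of $\gamma$. What you need is the following: along each geodesic $\gamma$ from $p$, the field $X=\nabla F-\langle\nabla F,\gamma'\rangle\gamma'$ satisfies $\nabla_{\gamma'}X=0$ and $X(0)=0$. Hence $X\equiv0$, so $\nabla F$ is tangent to radial geodesics and $F$ is constant on geodesic spheres.

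\textbf{Nondegeneracy.} Your argument does not go through as written, for three reasons.
\begin{enumerate}
\item The equation $2(n-1)\rho\rho''+(n-1)(n-2)\rho'^2+(\rho'+\lambda)\rho^2=\bar R$ is already multiplied through. Its leading coefficient vanishes at $\rho=0$, so no standard uniqueness theorem applies at $r=0$.
\item The equation was derived only on an interval where $\rho>0$ with a fixed fibre, and that is not yet known near $p$.
\item Even $\rho\equiv0$ near $p$ would only make $F$ locally constant, not trivial.
\end{enumerate}

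\textbf{A clean repair.} Take the divergence of $\nabla\nabla F=(R-\lambda)g$ and commute derivatives; this gives $(n-1)\nabla R=-\mathrm{Ric}(\nabla F)$. So $(\nabla F,R-\lambda)$ satisfies a closed linear first-order system along every curve. If both vanish at $p$, they vanish on all of $M$ by connectedness. Hence $R(p)\neq\lambda$ at every critical point, which gives:
\begin{enumerate}
\item every critical point is nondegenerate and isolated;
\item $(F\circ\gamma)'(r)=(R(p)-\lambda)r+O(r^2)\neq0$ for small $r>0$, so the level sets near $p$ really are geodesic spheres carrying a fixed fibre.
\end{enumerate}
Roundness of the fibre, which you only assert, then follows from $g_r=(\rho(r)/\rho(r_0))^2g_{r_0}$ together with $r^{-2}g_r\to g_{\mathrm{unit}}$. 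This yields $\bar g_S=c^{-2}g_{\mathrm{unit}}$ with $c=R(p)-\lambda$.

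\textbf{The global step.} Once this is fixed, the global step you flagged is routine.
\begin{enumerate}
\item With no critical points, $\nu$ is a smooth unit field on a complete manifold, so its flow $\phi_t$ is complete and consists of unit-speed geodesics.
\item For $x$ in a fixed level component $N$, the value $F(\phi_t(x))$ depends only on $t$, which gives injectivity of $(t,x)\mapsto\phi_t(x)$.
\item An open--closed argument gives surjectivity. In Case $(1)$, $\exp_p$ plays the same role up to the first zero of $\rho$.
\end{enumerate}
Connectedness of the regular level sets comes out of this description.
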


The following lemma will be used repeatedly below.
\begin{lemma}[Lemma 2.3 in \cite{Maeta28}]\label{zero}
Let $a_1, a_2, a_3, a_4$ be constants. Suppose that $\rho: \mathbb{R} \to \mathbb{R}$ is a smooth positive function satisfying the differential equation
\begin{equation} \label{eq:rho_ode}
\rho\rho'' = a_1- a_2\rho^2 - a_3^2\rho'\rho^2 - a_4^2\rho'^2.
\end{equation}
If $\rho$ satisfies either $\rho' \geq 0$ or $\rho' \leq 0$ on $\mathbb{R}$, and there exists a point $x_0 \in \mathbb{R}$ such that $\rho'(x_0) = 0$, then $\rho' \equiv 0$ on $\mathbb{R}$.
\end{lemma}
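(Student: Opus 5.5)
We need to prove Lemma 2.3: $\rho>0$ smooth on $\mathbb{R}$, solving
\[
\rho\rho''=a_1-a_2\rho^2-a_3^2\rho'\rho^2-a_4^2\rho'^2,
\]
with $\rho'$ of one sign and $\rho'(x_0)=0$. The claim is that $\rho'\equiv0$.

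The plan is to use uniqueness for the ODE, combined with the fact that a point where the one-signed function $\rho'$ vanishes is an interior extremum of $\rho'$. Set $p=\rho'(x_0)=0$ and $\rho_0=\rho(x_0)>0$. Since $\rho'$ has one sign and $\rho'(x_0)=0$, the point $x_0$ is a global minimum of $\rho'$ (if $\rho'\ge0$) or a global maximum (if $\rho'\le0$). In either case $\rho''(x_0)=0$. Evaluating the equation at $x_0$ then gives
\[
0=\rho_0\rho''(x_0)=a_1-a_2\rho_0^2 .
\]

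Consider the constant function $\tilde\rho\equiv\rho_0$. It satisfies the ODE precisely because $a_1-a_2\rho_0^2=0$. Near $\rho_0>0$ we may divide by $\rho$, so the equation becomes $\rho''=G(\rho,\rho')$ with $G$ smooth on $\{\rho>0\}$. The functions $\rho$ and $\tilde\rho$ share the same initial data $(\rho_0,0)$ at $x_0$. By Picard--Lindelöf uniqueness they coincide on the maximal interval on which the solution stays in $\{\rho>0\}$. Since $\rho>0$ everywhere on $\mathbb{R}$ by hypothesis, a standard open-closed (continuation) argument shows that the set $\{x:\rho(x)=\rho_0,\ \rho'(x)=0\}$ is nonempty, closed, and open. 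Hence it is all of $\mathbb{R}$, and $\rho'\equiv0$.

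The only step that needs care is deducing $\rho''(x_0)=0$. This is where the sign hypothesis is essential. Without it, $\rho'$ could cross zero transversally, $\rho''(x_0)$ could be nonzero, and the constant function would not solve the ODE with matching data. Since $\rho'$ is smooth and attains an interior extremum at $x_0$, Fermat's theorem applies directly. So I expect no real obstacle. Notably, the particular structure of the coefficients ($a_3^2$, $a_4^2$) plays no role; only smoothness of the right-hand side divided by $\rho$ matters.
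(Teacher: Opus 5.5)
Your proof is correct and complete. The sign condition makes $x_0$ an interior extremum of $\rho'$, so $\rho''(x_0)=0$. The equation then forces $a_1=a_2\rho(x_0)^2$, which is exactly the condition for the constant $\rho(x_0)$ to be a solution. On $\{\rho>0\}$ the equation is the first-order system $(\rho,\rho')'=(\rho',G(\rho,\rho'))$ with $G$ smooth, so uniqueness gives $\rho\equiv\rho(x_0)$ on all of $\mathbb{R}$, since $\rho>0$ throughout.

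The paper does not prove this lemma itself; it is quoted from \cite{Maeta28}, so there is no in-text proof to compare against. Your route is, however, the same device the paper uses in the proof of Theorem~\ref{main2}. There, $\rho'\geq-\lambda$ attained at $r_{-\lambda}$ gives $\rho''(r_{-\lambda})=0$, the equation forces $\bar R=(n-1)(n-2)\lambda^2$, and uniqueness identifies $\rho$ with a linear function.

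One small refinement of your closing remark: smoothness of $G$ is not the only thing that matters. It is also the autonomy of the equation that lets the single pointwise condition $G(\rho(x_0),0)=0$ make the constant function a solution on all of $\mathbb{R}$.
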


%%%%%%%%%%%%%%%%%%%%%
%%%%%%%%%%%%%%%%%%%%%
%%%%%%%%%%%%%%%%%%%%%

\section{Proof of Theorem \ref{main}}

In this section we prove Theorem \ref{main}.

\begin{proof}[Proof of Theorem \ref{main}]

If $F$ is constant, then the soliton equation $\nabla \nabla F=Rg$ gives $R\equiv0$. Hence we may assume that $F$ is nonconstant.

We only need consider Case (2) of Theorem \ref{CSZ}.
In this case, for $a,b,c,d=2,3,\cdots,n,$ the curvature tensors are obtained as follows:
\begin{align*}
R_{1a1b}&=-F'F'''{\bar g}_{ab},\quad R_{1abc}=0,\\
R_{abcd}&=(F')^2{\bar R}_{abcd}+(F'F'')^2(\bar g_{ad}\bar g_{bc}-\bar g_{ac}\bar g_{bd}),\notag
\end{align*}
\begin{align*}
R_{11}&=-(n-1)\frac{F'''}{F'},\quad 
R_{1a}=0,\\
R_{ab}&=\bar R_{ab}-((n-2)(F'')^2+F'F''')\bar g_{ab},\notag
\end{align*}
\begin{align}\label{RT3}
R=(F')^{-2}\bar R-(n-1)(n-2)\Big(\frac{F''}{F'}\Big)^2-2(n-1)\frac{F'''}{F'},
\end{align}
where the curvature tensors with a bar are the curvature tensors of $(N,\bar g)$.
By \eqref{RT3} and the soliton equation $R=F''$, one has the following ODE.
\begin{equation}\label{key}
\rho'\rho^2+(n-1)(n-2)\rho'^2+2(n-1)\rho\rho''=\bar R,
\end{equation}
where one can choose the direction of $r$ such that $\rho(r)=F'(r)>0$.
\\

\noindent
Case 1. $\bar R<0$. 

Assume that there exists $r_0\in\mathbb{R}$ such that $\rho'(r_0)=0$.
By \eqref{key}, we have 
\[
\rho''(r_0)=\frac{\bar R}{2(n-1)\rho(r_0)}<0.
\]
Thus, there is at most one zero of $\rho'$. 
Hence, 
\[
\rho'>0\qquad\text{on}\quad (-\infty,r_0).
\]
Set $s=-r$. We use the same notation $\rho$ for the reparametrized function and $\dot\rho(s)=\frac{d}{ds}\rho(s)$. 
Equation \eqref{key} is
\begin{equation}\label{keys}
-\dot\rho\rho^2+(n-1)(n-2)\dot\rho^2+2(n-1)\rho\ddot\rho=\bar R.
\end{equation}
Combining this, $\dot\rho(s)<0$, and $\rho(s)>0$ on $(-r_0,+\infty)$, we have $\ddot\rho(s)<0$ on $(-r_0,+\infty)$. Since $\rho(s)>0$ on $(-r_0,+\infty)$, we have a contradiction.
Thus, there exists no zero of $\rho'$. 
\\

\noindent
Case 1-a. $\bar R<0$ and $\rho'>0$ on $\mathbb{R}$.

By \eqref{key}, we have $\rho''<0$ on $\mathbb{R}$, which yields a contradiction.
\\

\noindent
Case 1-b. $\bar R<0$ and $\rho'<0$ on $\mathbb{R}$.

Fix $\tilde r\in\mathbb{R}$. Set $a=\rho(\tilde r)$.  
Since $\rho'<0$, $\rho(r)<a$ on $(\tilde r,+\infty)$.

Assume that $\frac{\bar R}{2a^2}\leq\rho'<0$. Then, by \eqref{key}, we have
\begin{align*}
\rho''
&=\frac{1}{2(n-1)\rho}
\Big\{
\bar R-\rho'\rho^2-(n-1)(n-2)\rho'^2
\Big\}\\
&<\frac{1}{2(n-1)\rho}
\Big\{
\bar R-\rho'a^2
\Big\}\\
&\leq\frac{\bar R}{4(n-1)\rho}<0.
\end{align*}
Hence, if $\frac{\bar R}{2a^2}\leq\rho'<0$, then $\rho'$ is strictly decreasing.  
Therefore, we obtain
\[
\rho'
\leq\max
\Big\{
\rho'(\tilde r),\frac{\bar R}{2a^2}\Big\}<0\qquad \text{on}\quad (\tilde r,+\infty),
\]
which is a contradiction, because $\rho$ is a positive function.

Hence, $\bar R<0$ cannot occur.
\\

\noindent
Case 2. $\bar R>0$. 

We show that $\rho'(r)\geq0$ on $\mathbb{R}$.

Assume that there exists $\tilde r\in\mathbb{R}$ such that 
\[
\rho'(\tilde r)<0.
\]
Hence,
\[
\rho'<0\qquad\text{on} \quad (-\infty,r_2)\ni\tilde r,
\]
for some $r_2$. In fact, if there exists a point $r_0\in \mathbb{R}$ such that $\rho'(r_0)=0$, then by \eqref{key}, we have
\[
\rho''(r_0)=\frac{\bar R}{2(n-1)\rho(r_0)}>0.
\]

Set $s=-r$ and $s_2=-r_2$. We show the following.
\begin{claim}
The positive function $\rho$ satisfies $\rho(r)\rightarrow +\infty$ as $r\rightarrow-\infty$.
\end{claim}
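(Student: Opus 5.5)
In the variable $s=-r$ we have $\dot\rho(s)>0$ on $(s_2,+\infty)$, and \eqref{keys} reads
\[
2(n-1)\rho\ddot\rho=\bar R+\dot\rho\rho^2-(n-1)(n-2)\dot\rho^2 .
\]
Since $\rho$ is increasing in $s$ there, $\rho(s)\to L\in(0,+\infty]$ as $s\to+\infty$. The plan is to argue by contradiction: assume $L<+\infty$ and derive an impossibility from the asymptotic behaviour of the ODE.

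First, if $L<\infty$, then $\int_{s_2}^\infty\dot\rho\,ds=L-\rho(s_2)<\infty$. This forces $\liminf_{s\to\infty}\dot\rho=0$, so there is a sequence $s_k\to\infty$ with $\dot\rho(s_k)\to0$.

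Next, write the right-hand side as $\bar R+\dot\rho\bigl(\rho^2-(n-1)(n-2)\dot\rho\bigr)$. Whenever $0<\dot\rho\le \rho^2/((n-1)(n-2))$, it is at least $\bar R$. On $(s_2,\infty)$ we have $\rho\ge\rho(s_2)=:m>0$, so whenever $0<\dot\rho\le m^2/((n-1)(n-2))$ we get
\[
\ddot\rho\ge \frac{\bar R}{2(n-1)L}>0 .
\]
Let $\delta=m^2/((n-1)(n-2))$. Pick $s_k$ with $\dot\rho(s_k)<\delta$. I claim $\dot\rho$ is nondecreasing from $s_k$ on. It cannot drop below $\dot\rho(s_k)$: at any point where $\dot\rho$ would first return to a value below $\dot\rho(s_k)$ after time $s_k$, we would have $\dot\rho\in(0,\delta)$, hence $\ddot\rho>0$, which is a contradiction. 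So either $\dot\rho$ stays in $(0,\delta)$, in which case $\ddot\rho\ge c>0$ and $\dot\rho$ grows linearly, or it exceeds $\delta$ at some point and then stays at least $\min(\dot\rho(s_k),\ldots)$. In both cases $\dot\rho\ge \dot\rho(s_k)>0$ for all $s\ge s_k$. Then $\rho$ grows at least linearly, contradicting $L<\infty$. (Even more simply: $\dot\rho(s_k)\to0$ along the sequence is incompatible with $\dot\rho$ staying $\ge\dot\rho(s_{k_0})>0$ afterwards.)

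Hence $L=+\infty$, that is, $\rho(r)\to+\infty$ as $r\to-\infty$. The main point to check carefully is the barrier argument, namely that $\dot\rho$ cannot decrease once it is small. This works because $\bar R>0$ makes $\ddot\rho$ strictly positive at small positive values of $\dot\rho$, uniformly thanks to $m\le\rho\le L$.
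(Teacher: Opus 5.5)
Your proof is correct, but it takes a different route from the paper's. The paper assumes $\rho\to c<+\infty$ and splits by the sign of $\ddot\rho$:
\begin{itemize}
\item if $\ddot\rho\ge0$ throughout, $\rho$ is unbounded;
\item if $\ddot\rho\le0$ throughout, then $\dot\rho\to0$ and \eqref{keys} forces $\ddot\rho\to\bar R/(2(n-1)c)>0$, a contradiction;
\item otherwise the paper differentiates \eqref{keys} and finds $\dddot\rho=\dot\rho^2/(n-1)>0$ at every zero of $\ddot\rho$. So $\ddot\rho$ changes sign exactly once, from negative to positive, and $\rho$ is again unbounded.
\end{itemize}

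You never differentiate the equation. You observe that the right-hand side of \eqref{keys} is at least $\bar R>0$ whenever $0<\dot\rho\le\delta$. You then use the level $a=\dot\rho(s_k)\in(0,\delta)$ as a one-sided barrier for $\dot\rho$, which forces linear growth of $\rho$. This is shorter and avoids both the third-derivative computation and the three-way case split.

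The barrier only needs $\ddot\rho>0$ at points where $\dot\rho=a$, and that uses $\rho\ge m$, not $\rho\le L$. So your argument can be made direct, with no contradiction hypothesis: either $\dot\rho\ge\delta$ on the whole half-line, or $\dot\rho$ takes some value $a<\delta$ and never goes below $a$ afterwards. In exchange, the paper's route yields the structural fact that $\ddot\rho$ has at most one sign change. It reuses that mechanism in \eqref{rho3} and \eqref{p5.1}.

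Two presentational fixes:
\begin{itemize}
\item Replace the muddled sentence about exceeding $\delta$ and staying at least $\min(\dot\rho(s_k),\ldots)$ with the clean step: let $s^*=\inf\{s>s_k:\dot\rho(s)<a\}$; then $\dot\rho(s^*)=a$ and $\ddot\rho(s^*)>0$, so $\dot\rho>a$ just to the right of $s^*$, contradicting the definition of $s^*$.
\item If $r_2=+\infty$ (so $s_2=-\infty$), define $m$ as the value of $\rho$ at some finite point of the interval.
\end{itemize}
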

\begin{proof}

Assume that $\rho(s)\to c<+\infty$ as $s\to+\infty$.

If $\ddot\rho\geq0$ on $(s_2,+\infty)$, then $\dot\rho$ is nondecreasing.
Hence $\rho$ cannot be bounded from above, which is a contradiction.

Assume that $\ddot\rho\leq0$ on $(s_2,+\infty)$. 
If $\dot\rho\to \alpha>0$ as $s\to+\infty$, then this contradicts $\rho\leq c$. Hence $\dot\rho\to 0$ as $s\to+\infty$.
By \eqref{keys}, 
$\ddot\rho\to \frac{\bar R}{2(n-1)c}>0$ as $s\to+\infty$. 
However, this contradicts the assumption $\ddot\rho\leq0$.

Hence there exists a point $s_0\in(s_2,+\infty)$ such that $\ddot\rho(s_0)=0$.
By differentiating \eqref{keys}, we obtain
\[
\ddot\rho\rho^2+2\rho\dot\rho^2-2(n-1)^2\dot\rho\ddot\rho-2(n-1)\rho\dddot\rho=0.
\]
At $s_0$, 
\[
\dddot\rho=\frac{1}{n-1}\dot\rho^2>0.
\] 
Therefore, there exists exactly one zero of $\ddot\rho$, and 
\begin{equation*}
\left\{
\begin{aligned}
&\ddot\rho<0\qquad\text{on}\quad (s_2,s_0),\\
&\ddot\rho>0\qquad\text{on}\quad (s_0,+\infty).
\end{aligned}
\right.
\end{equation*}
Hence $\rho$ cannot be bounded from above, which is a contradiction.
\end{proof}
Set $G(\rho)=\dot\rho$.
\begin{align*}
\frac{d}{d\rho}
\Big(
\rho^{\frac{n-2}{2}}G(\rho)
\Big)
&=
\frac{1}{2(n-1)G(\rho)}\bar R \rho^{\frac{n-4}{2}}+\frac{1}{2(n-1)}\rho^{\frac{n}{2}}\\
&>\frac{1}{2(n-1)}\rho^{\frac{n}{2}}.
\end{align*}
By integrating both sides of the inequality, we have
\[
G(\rho)>\frac{1}{(n-1)(n+2)}\rho^2+C\rho^{-\frac{n-2}{2}},
\]
for some constant $C$.
For sufficiently large $\rho$, we obtain
\[
G(\rho)>\frac{1}{2(n-1)(n+2)}\rho^2.
\]
Hence, 
\[
\frac{d}{ds}
\Big(
-\rho^{-1}
\Big)
>\frac{1}{2(n-1)(n+2)}.
\]
Take $s_\ell$ sufficiently large. For $s>s_\ell$, integrating both sides of the above inequality, 
we have
\[
-\rho^{-1}(s)+\rho^{-1}(s_\ell)
>\frac{1}{2(n-1)(n+2)}(s-s_\ell).
\]
Taking $s\to +\infty$, we have a contradiction.

Hence there exists no $\tilde r$ such that $\rho'(\tilde r)<0$.
Therefore $\rho'(r)\geq0$ on $\mathbb{R}$.

If there exists $r_0$ such that $\rho'(r_0)=0$, then by Lemma \ref{zero}, we have $\rho'\equiv0$ on $\mathbb{R}$. By \eqref{key}, we have $\bar R=0$, which is a contradiction.
Therefore, we obtain $\rho'(r)>0$ on $\mathbb{R}$.

Set $s=-r$. Fix $s_1$ and set $\alpha=\rho(s_1)$.
Since $\dot\rho(s)=\frac{d}{ds}\rho(s)<0$, we have
\[
\rho<\alpha\qquad\text{on}\quad(s_1,+\infty).
\]
Take a negative constant $\beta$ such that 
\[
\beta\alpha^2-(n-1)(n-2)\beta^2\geq-\frac{\bar R}{2}.
\]
If $\dot\rho<-\varepsilon$ for some $\varepsilon>0$ on $(s_1,+\infty)$, then we have a contradiction, because $\rho>0$. 
Hence we may assume that there exists a maximal interval $(s_3,s_4)\subset(s_1,+\infty)$ on which $\dot\rho\geq\beta$. In this case, by \eqref{keys},
\begin{align*}
\ddot\rho
&\geq \frac{1}{2(n-1)\rho}
\Big(
\bar R+\beta\alpha^2-(n-1)(n-2)\beta^2
\Big)
\geq\frac{\bar R}{4(n-1)\alpha}>0.
\end{align*}
Hence $s_4$ must be $+\infty$. However, since $\ddot\rho>\frac{\bar R}{4(n-1)\alpha}$ on $(s_3,+\infty)$, we have a contradiction, because $\dot\rho<0$. 
\\

\noindent
Case 3. $\bar R=0$.

We show that $\rho'(r)\geq0$ on $\mathbb{R}$.

Differentiating \eqref{key}, we have
\begin{equation}\label{key3}
\rho''\rho^2+2\rho\rho'^2+2(n-1)^2\rho'\rho''+2(n-1)\rho\rho'''=0.
\end{equation}
Assume that there exists $\tilde r\in\mathbb{R}$ such that 
\[
\rho'(\tilde r)<0,
\]
that is, $\rho'(r)<0$ on a maximal interval $(\tilde r_1,\tilde r_2)\ni\tilde r$.
If $\tilde r_1$ and $\tilde r_2$ are finite, then $\rho'(\tilde r_1)=\rho'(\tilde r_2)=0$.
We note that if there exists $\tilde r_0\in(\tilde r_1,\tilde r_2)$ such that $\rho''(\tilde r_0)=0$, by \eqref{key3}, we obtain
\begin{equation}\label{rho3}
\rho'''(\tilde r_0)=-\frac{1}{n-1}\rho'^2(\tilde r_0)<0.
\end{equation}
Hence if there exists a zero of $\rho''$, then it is unique, and $\rho''$ crosses zero from positive to negative.

Assume that $\tilde r_1$ and $\tilde r_2$ are finite. In this case, $\rho(\tilde r_1)>\rho(\tilde r_2)$.
There exists $\tilde r_0\in(\tilde r_1,\tilde r_2)$ such that $\rho''(\tilde r_0)=0$. 
However, since $\rho$ is smooth, the above argument yields a contradiction.

Assume that $\tilde r_1$ is finite and $\tilde r_2=+\infty$. 
In this case we have
$\rho'(r)<0$ on $(\tilde r_1,+\infty)$. 
If there exists $\tilde r_0\in(\tilde r_1,+\infty)$ such that $\rho''(\tilde r_0)=0$, then  
the above argument shows that $\rho''<0$ on $(\tilde r_0,+\infty)$. 
Combining this, $\rho>0$, and $\rho'<0$ on $(\tilde r_1,+\infty)$, we have a contradiction.
Hence there exists no zero of $\rho''$. Since $\rho'(\tilde r_1)=0$, we have $\rho''<0$ on $(\tilde r_1,+\infty)$. Positivity of $\rho$ yields a contradiction.

Assume that $\tilde r_1=-\infty$ and $\tilde r_2$ is finite.
If there exists $\tilde r_0\in(-\infty,\tilde r_2)$ such that $\rho''(\tilde r_0)=0$, then $\rho''<0$ on $(\tilde r_0,\tilde r_2)$. Since $\rho'(\tilde r_2)=0$, we have a contradiction. Hence there exists no zero of $\rho''$ on $(-\infty,\tilde r_2)$.
In this case, since $\rho'(\tilde r_2)=0$, we have $\rho''>0$ on $(-\infty,\tilde r_2)$. Therefore, $\rho\to+\infty$ as $r\to-\infty$. 

Assume that $\tilde r_1=-\infty$ and $\tilde r_2=+\infty$, that is, $\rho'<0$ on $\mathbb{R}$.
If there exists $\tilde r_0\in\mathbb{R}$ such that $\rho''(\tilde r_0)=0$, then we have $\rho''<0$ on $(\tilde r_0,+\infty)$. Since $\rho>0$, we have a contradiction.

Hence we obtain $\rho''>0$ on $\mathbb{R}$ or $\rho''<0$ on $\mathbb{R}$. 
If $\rho''>0$ on $\mathbb{R}$, then we have $\rho\to +\infty$ as $r\to-\infty$.
If $\rho''<0$ on $\mathbb{R}$, then positivity of $\rho$ yields a contradiction.

Therefore, we obtain $\rho'<0$ on $(-\infty, \tilde r_2)$ and $\rho\to+\infty$ as $r\to-\infty$. Set $s=-r$ and $G(\rho)=\dot\rho$. By the same argument as in Case 2, we obtain a contradiction. 

Therefore, there exists no $\tilde r$ such that $\rho'(\tilde r)<0$, and we obtain $\rho'(r)\geq0$ on $\mathbb{R}$.
%If there exists $\tilde r_0\in\mathbb{R}$ such that $\rho''(\tilde r_0)=0$, we have 
%$\rho''<0$ on $(\tilde r_0,+\infty)$.

If there exists $r_0\in\mathbb{R}$ such that $\rho'(r_0)=0$, then by Lemma \ref{zero}, we have $\rho'\equiv0$ on $\mathbb{R}$. By \eqref{key} and the soliton equation $\nabla\nabla F=Rg$, we have $R=\bar R=0$.

We consider $\rho'(r)>0$ on $\mathbb{R}$.

By \eqref{key},
\begin{align*}
\rho''
&= -\frac{\rho\rho'}{2(n-1)}-\frac{(n-2)\rho'^2}{2\rho}
<0.
\end{align*}
Since $\rho>0$ on $\mathbb{R}$, we have a contradiction.
\end{proof}

\section{Proof of Theorem \ref{main2}}

In this section, we classify complete shrinking gradient Yamabe solitons.

\begin{proof}[Proof of Theorem \ref{main2}]
We only need to consider Case (2) of Theorem \ref{CSZ}. We choose the orientation of $r$ so that $\rho=F'(r)>0$. 
By the same argument as in the proof of Theorem \ref{main}, we obtain
\begin{equation}\label{3.2}
\lambda\rho^2+\rho'\rho^2+(n-1)(n-2)\rho'^2+2(n-1)\rho\rho''=\bar R.
\end{equation}
When $\bar R\leq0$, the same argument shows that there exist no positive solutions defined on all of $\mathbb{R}$.

Hence we consider $\bar R>0$.
Set 
\[
\bar R_\lambda=\bar R-(n-1)(n-2)\lambda^2.
\]
By \eqref{3.2},
\[
\rho''=\frac{\bar R-\lambda\rho^2-\rho'\rho^2-(n-1)(n-2)\rho'^2}{2(n-1)\rho}.
\]
If 
\[
\rho'>\sqrt{\frac{\bar R}{(n-1)(n-2)}}
\]
on $\mathbb{R}$, then the positivity of $\rho$ yields a contradiction.
On $\Big\{r\in\mathbb{R}~|~\rho'(r)=\sqrt{\frac{\bar R}{(n-1)(n-2)}}\Big\}$, 
\begin{align*}
\rho''
&=-\frac{1}{2(n-1)}\left(\lambda+\sqrt{\frac{\bar R}{(n-1)(n-2)}}\right)\rho<0.
\end{align*}
Hence we have
\[
\rho'<\sqrt{\frac{\bar R}{(n-1)(n-2)}}
\]
on $\mathbb{R}$.
Differentiating \eqref{3.2}, we have
\begin{equation}\label{3.2.2}
2\lambda\rho\rho'+\rho''\rho^2+2\rho\rho'^2+2(n-1)^2\rho'\rho''+2(n-1)\rho\rho'''=0.
\end{equation}
If there exists $r_0\in\mathbb{R}$ such that $\rho''(r_0)=0$, then 
\begin{equation}\label{p5.1}
\rho'''=-\frac{1}{n-1}\rho'(\lambda+\rho')
\end{equation}
at $r_0$.

In the following we consider the ODE on $\left\{r\in\mathbb{R}~|~\rho'<-\lambda\right\}$. 
Suppose that there exists a point $r_-$ such that $\rho'(r_-)<-\lambda$ and $\rho''(r_-)\leq0$. 
At every zero of $\rho''$ in the region $\rho'<-\lambda$, \eqref{p5.1} gives $\rho'''<0$.
Thus, $\rho''$ cannot cross zero from negative to positive.
Consequently, $\rho'$ remains non-increasing and strictly below $-\lambda$ for $r\geq r_-$, contradicting the positivity of $\rho$.
Therefore, on $\left\{r\in\mathbb{R}~|~\rho'<-\lambda\right\}$, we have $\rho''>0$.
Hence if there exists such an interval, $\left\{r\in\mathbb{R}~|~\rho'<-\lambda\right\}=(-\infty,r_2)$ for some $r_2$.

On $(-\infty,r_2)$, $\rho'<-\lambda$ and $\rho''>0$.
Therefore, for some $r_1<r_2$ and some $\varepsilon>0$, we have
\begin{equation*}
\left\{
\begin{aligned}
&\rho\to+\infty~\text{as}~r\to-\infty\\
&\rho'\leq-\lambda-\varepsilon \quad\text{on}\quad (-\infty,r_1).
\end{aligned}
\right.
\end{equation*}

Set $s=-r$ and $s_1=-r_1$.
Then 
\[\dot\rho\geq\lambda+\varepsilon,\quad \ddot\rho>0 \quad \text{on} \quad (s_1,+\infty).
\]
Set $G(\rho)=\dot\rho$.
By \eqref{3.2}, 
\begin{align*}
\frac{d}{d\rho}G(\rho)
=\frac{\ddot\rho}{\dot\rho}
=\frac{\bar R}{2(n-1)\rho\dot\rho}
-\frac{\lambda\rho}{2(n-1)\dot\rho}
+\frac{\rho}{2(n-1)}
-\frac{n-2}{2\rho}\dot\rho.
\end{align*}
Since $\dot\rho\geq\lambda+\varepsilon$, 
\[
\frac{d}{d\rho}
\Big(
\rho^{\frac{n-2}{2}}G(\rho)
\Big)
=
\frac{1}{2(n-1)}\left(1-\frac{\lambda}{\dot\rho}\right)\rho^\frac{n}{2}
+\frac{\bar R}{2(n-1)\dot\rho}\rho^{\frac{n-4}{2}}.
\]
Integrating both sides of the above equation, we obtain
\[
G(\rho)
\geq
\frac{\varepsilon}{(n-1)(n+2)(\lambda+\varepsilon)}\rho^2+C\rho^{-\frac{n-2}{2}},
\]
for some $C$. For sufficiently large $\rho$, we have
\[
G(\rho)\geq
\frac{\varepsilon}{(n-1)(n+2)(2\lambda+\varepsilon)}\rho^2.
\]
For sufficiently large $s_\ell$, by integrating both sides of the inequality for $s_\ell<s$, we obtain
\[
-\rho^{-1}(s)+\rho^{-1}(s_\ell)\geq \frac{\varepsilon}{2(n-1)(n+2)(\lambda+\varepsilon)}(s-s_\ell).
\]
Taking $s\to+\infty$, we have a contradiction.

Therefore, $\left\{r\in\mathbb{R}~|~\rho'<-\lambda\right\}$ is empty, and $\rho'\geq-\lambda$ on $\mathbb{R}$. If there exists $r_{-\lambda}\in\mathbb{R}$ such that $\rho'(r_{-\lambda})=-\lambda$, then by \eqref{3.2}, we have $\rho''(r_{-\lambda})=0$. By \eqref{3.2} again, $\bar R=(n-1)(n-2)\lambda^2$. By uniqueness of solutions to the ODE, we have
\[
\rho(r)=\rho(r_{-\lambda})-\lambda(r-r_{-\lambda}).
\] 
However, the positivity of $\rho$ yields a contradiction.
Therefore,  we have $R>0$.
Therefore, we obtain 
\begin{equation}\label{rhopbounded}
-\lambda<\rho'<\sqrt{\frac{\bar R}{(n-1)(n-2)}}.
\end{equation}
We introduce $\eta$ as follows.
\begin{equation}\label{etadef}
\eta=\rho^{n-2}\rho'^2+\tilde \eta(\rho),
\end{equation}
where we define 
\[
\tilde \eta(u)=\frac{\lambda}{n(n-1)}u^n-\frac{\bar R}{(n-1)(n-2)}u^{n-2}.
\]
Combining
\begin{equation}\label{tetap}
\frac{d}{dr}\tilde \eta(\rho)=\frac{\rho^{n-3}\rho'}{n-1}\Big\{\lambda\rho^2-\bar R\Big\}
\end{equation}
with
\[
\frac{d}{dr}(\rho^{n-2}\rho'^2)
=\frac{\rho^{n-3}\rho'}{n-1}\Big(
\bar R-\lambda\rho^2-\rho'\rho^2
\Big),
\]
we have
\begin{equation}\label{etap}
\eta'=-\frac{\rho^{n-1}\rho'^2}{n-1}\leq0.
\end{equation}
Therefore, $\eta$ is a non-increasing function.
Set $\tilde \eta_{\min}=\tilde \eta\left(\sqrt{\frac{\bar R}{\lambda}}\right)$.  Hence 
\begin{equation}\label{etaboundedbelow}
\eta\geq\tilde \eta_{\min}.
\end{equation}
By the definition of $\tilde \eta$, if $\rho\to+\infty$, then $\tilde \eta(\rho)\to+\infty$.

In what follows, we assume that $\rho$ is nonconstant.
\begin{claim}\label{etar-infty}
$\eta\to + \infty$ as $r\to-\infty$.
\end{claim}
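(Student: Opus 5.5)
We argue by contradiction, using only the monotonicity \eqref{etap}, the lower bound \eqref{etaboundedbelow}, the a priori bound \eqref{rhopbounded} on $\rho'$, and the fact that \eqref{3.2} is an autonomous second-order ODE.

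Since $\eta$ is non-increasing by \eqref{etap}, the limit $L=\lim_{r\to-\infty}\eta(r)$ exists in $(-\infty,+\infty]$. Suppose $L<+\infty$. Then $\eta\le L$ on $\mathbb{R}$, and \eqref{etaboundedbelow} gives $\eta\ge\tilde\eta_{\min}$. The plan has three steps.

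First, since $\eta\ge\tilde\eta(\rho)$ and $\tilde\eta(u)\to+\infty$ as $u\to+\infty$, the bound $\eta\le L$ shows that $\rho$ is bounded above on $\mathbb{R}$. Integrating \eqref{etap} over $(-\infty,r_0]$ gives
\[
\int_{-\infty}^{r_0}\rho^{n-1}\rho'^2\,dr=(n-1)\bigl(L-\eta(r_0)\bigr)<\infty .
\]
Also, $\rho'$ is bounded by \eqref{rhopbounded}.

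Second, I want to conclude that $(\rho,\rho')$ accumulates, as $r\to-\infty$, only at equilibria of \eqref{3.2}. The equilibria with $\rho>0$ are $\rho'=0$, $\rho=\sqrt{\bar R/\lambda}$. This is a LaSalle-type argument for the $\alpha$-limit set. Take any sequence $r_k\to-\infty$ and consider the shifted solutions $\rho_k(r)=\rho(r+r_k)$. Provided $\rho$ stays in a compact subset of $(0,\infty)$ along $(-\infty,r_0]$, the right-hand side of \eqref{3.2} solved for $\rho''$ is bounded. Arzel\`a--Ascoli then gives a subsequential limit $\rho_\infty$ solving \eqref{3.2}. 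By the integrability in the first step, $\int_{-T}^{T}\rho_\infty^{n-1}\rho_\infty'^2\,dr=0$, so $\rho_\infty$ is constant. A positive constant solution of \eqref{3.2} must be $\sqrt{\bar R/\lambda}$. Hence $\eta(r_k)\to\tilde\eta(\sqrt{\bar R/\lambda})=\tilde\eta_{\min}$, so $L=\tilde\eta_{\min}$. Combined with $\tilde\eta_{\min}\le\eta\le L$, this forces $\eta\equiv\tilde\eta_{\min}$. Then $\eta'\equiv0$ in \eqref{etap} gives $\rho'\equiv0$, contradicting the standing assumption that $\rho$ is nonconstant.

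The main obstacle is the compactness hypothesis in the second step, namely a positive lower bound $\liminf_{r\to-\infty}\rho>0$; the upper bound comes for free from $\eta\le L$. To exclude $\rho(r_k)\to0$ along some sequence $r_k\to-\infty$, I would first try to exploit the equation near $\rho=0$. At any local minimum of $\rho$ we have $\rho'=0$, and \eqref{3.2} gives $\rho''=(\bar R-\lambda\rho^2)/(2(n-1)\rho)$, which is large and positive when $\rho$ is small. Together with the bound $\rho'^2<\bar R/((n-1)(n-2))$ from \eqref{rhopbounded}, this should show that small values of $\rho$ can be attained only near a single minimum, or along a monotone approach to $0$. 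In the monotone case, $\rho'\ge0$ near $-\infty$ and $\rho\to0$. Since $\rho$ is then monotone and bounded, $\rho'\to0$ along a sequence, and \eqref{3.2} gives $\rho\rho''\to\bar R/(2(n-1))>0$. So $\rho''\to+\infty$ along that sequence, which should be incompatible with $\rho'\ge0$ and $\rho\to0$ as $r\to-\infty$.

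If this direct argument is awkward, a fallback is the expansion
\[
\eta=\rho^{n-2}\Bigl(\rho'^2-\tfrac{\bar R}{(n-1)(n-2)}\Bigr)+O(\rho^n).
\]
It shows that $\eta(r_k)\to0$ whenever $\rho(r_k)\to0$, which pins $L=0$. I would then combine $L=0$ with the strict monotonicity of $\eta$ and the behaviour of the ODE near $\rho=0$ to reach a contradiction.
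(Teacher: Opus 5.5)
There is a genuine gap, and it sits exactly at the step you flag as the main obstacle. Your second step applies Arzel\`a--Ascoli on all of $(-\infty,r_0]$, so it needs $\liminf_{r\to-\infty}\rho>0$. Your argument for this covers only a monotone approach $\rho\to0$.

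The observation that every critical point of $\rho$ in $\{\rho<\sqrt{\bar R/\lambda}\}$ is a strict local minimum only shows that each excursion of $\rho$ below $\sqrt{\bar R/\lambda}$ contains at most one minimum. It does not bound the number of excursions. So the following scenario remains open: $\rho$ oscillates as $r\to-\infty$, its maxima are $\geq\sqrt{\bar R/\lambda}$ and its minima tend to $0$, so $\liminf\rho=0<\limsup\rho$.

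Neither of your tools excludes this. The bound on $\rho'$ only gives each excursion a minimum duration; it does not limit how many there are. The fallback fails too: $L=0$ is fully consistent with $\eta$ being non-increasing in this scenario, and ``the behaviour of the ODE near $\rho=0$'' is not yet an argument. Incidentally, \eqref{rhopbounded} yields $\rho'^2<\bar R/((n-1)(n-2))$ only when $\lambda^2\leq \bar R/((n-1)(n-2))$.

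The missing idea, which is what the paper uses, is that the compactness argument only needs to be local. Suppose $(\rho,\rho')(r_j)\to(\rho_*,\rho'_*)$ with $\rho_*>0$ along some $r_j\to-\infty$. The vector field is smooth near $(\rho_*,\rho'_*)$, so continuous dependence on initial data gives convergence of the shifted solutions on a short interval $[0,\tau]$. This needs no lower bound on $\rho$ along $(-\infty,r_0]$. Your integrability then forces the limit to be the equilibrium, so $L=\tilde\eta_{\min}$, $\eta$ is constant, and you have the same contradiction.

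Since $\rho$ and $\rho'$ are bounded, this rules out every positive accumulation point. Hence $\rho\to0$ as a genuine limit, and the oscillatory scenario dies because it has $\limsup\rho>0$. Only now does your monotone-case argument apply: by the critical-point observation, $\rho'>0$ near $-\infty$.

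To finish that case, you still need to make ``incompatible'' precise:
\begin{itemize}
\item Where $\rho$ and $\rho'$ are both small and positive, \eqref{3.2} gives $\rho''\geq c/\rho$ for some $c>0$.
\item Pick $r_k$ with $\rho(r_k)$ and $\rho'(r_k)$ small; such points exist because $\int^{r_0}_{-\infty}\rho'\,dr<\infty$.
\item Moving left from $r_k$, $\rho'$ stays small while $\rho$ decreases.
\item Hence $\rho'$ decreases at rate at least $c/\rho(r_k)$ and becomes negative in finite time, a contradiction.
\end{itemize}
This is essentially the content of Claim~\ref{rhor-infty}.
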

\begin{proof}
Assume that $\eta\to c$ as $r\to-\infty$, for some $c<+\infty$.
Combining this with \eqref{etaboundedbelow}, we have
\[
\tilde \eta_{\min}\leq\eta\leq c.
\]
Since $\eta$ does not tend to $+\infty$, $\tilde \eta(\rho)$ also does not tend to $+\infty$.
Therefore, $\rho$ is bounded.

Suppose that there exists a sequence $r_j\to-\infty$ such that  
\[
(\rho(r_j),\rho'(r_j))\to(\rho_*,\rho'_*),
\]
where $\rho_*>0$.
Set 
\[
\alpha(r)=(\rho(r),\rho'(r)),
\beta(r)=\eta(\rho(r),\rho'(r))
\]
and 
\[
\alpha_j(r)=\alpha(r_j+r).
\]
At $r=0$, 
\[
\alpha_j(0)=\alpha(r_j)\to(\rho_*,\rho'_*).
\]
For $\rho>0$, the vector field is smooth.
Hence by continuous dependence on initial data, on a sufficiently small common interval $[0,\tau]$, 
\[\alpha_j(r)\to\alpha_*(r)=(\xi_1(r),\xi_2(r))
\]
with $\xi_1(r)>0$.
Integrating \eqref{etap} from $r_j$ to $r_j+\tau$, we obtain
\begin{align*}
\eta(r_j+\tau)-\eta(r_j)
&=-\frac{1}{n-1}\int_{r_j}^{r_j+\tau}\rho^{n-1}(t)\rho'^2(t)dt\\
&=-\frac{1}{n-1}\int_{0}^{\tau}\rho^{n-1}(r_j+t)\rho'^2(r_j+t)dt.
\end{align*} 
Taking $r_j\to-\infty$, we have
\[
0=-\frac{1}{n-1}\int_{0}^{\tau}\xi_1^{n-1}(t)\xi_2^2(t)dt.
\]
Hence $\xi_2(r)\equiv0$ and $\xi_1\equiv \rho_*$ on $[0,\tau]$.
By \eqref{3.2}, $\rho_*=\sqrt{\frac{\bar R}{\lambda}}$. 
Thus,
\[
(\rho_*,\rho'_*)
=
\Big(
\sqrt{\frac{\bar R}{\lambda}},0\Big).
\]
Therefore,
\begin{align*}
\eta(\alpha_*(r))
&=\lim_{j\to+\infty}\eta(\alpha_j(r))\\
&=\lim_{j\to+\infty}\beta(r_j+r)\\
&=c.
\end{align*}
Since 
\[
\alpha_*(r)\equiv \left(\sqrt{\frac{\bar R}{\lambda}},0\right),
\]
we have $c=\tilde\eta_{\min}$.
The inequalities
\[
\tilde\eta_{\min}\leq\eta\leq c
\]
imply that $\eta$ is constant. By \eqref{etap}, $\rho'\equiv0$, contradicting the assumption that $\rho$ is nonconstant. 
This means that if $\rho$ is nonconstant and $c<+\infty$, $\rho(r)\to0$ as $r\to-\infty$. We show that this cannot occur.

\begin{claim}\label{rhor-infty}
It cannot occur that $\rho(r)\to0$ as $r\to-\infty$.
\end{claim}

\begin{proof}
Assume that $\rho(r)\to0$ as $r\to-\infty$. Then for some $r_{-\ell}$, 
\[
\rho(r)<\sqrt{\frac{\bar R}{\lambda}}
\]
on $(-\infty,r_{-\ell})$. 

If there exists $r_0\in (-\infty,r_{-\ell})$ such that $\rho'(r_0)=0$, then by \eqref{3.2}, we have $\rho''>0$.
This yields that $\rho'<0$ on $(-\infty,r_0)$.
However, 
\[
\rho(r)=-\int_{r}^{r_0}\rho'(t)dt+\rho(r_0)>\rho(r_0)>0,
\]
which contradicts
\[
\lim_{r\to-\infty}\rho(r)=0.
\]
Therefore, there exists no $r_0$ such that $\rho'(r_0)=0$.
Hence $\rho'>0$ on $(-\infty,r_{-\ell_2})$ for $r_{-\ell_2}<r_{-\ell}$.

If there exists $r_0\in(-\infty,r_{-\ell_2})$ such that $\rho''(r_0)=0$,
 then, by \eqref{3.2.2},  
 \[
 \rho'''(r_0)=-\frac{1}{n-1}\rho'(r_0)(\lambda+\rho'(r_0))<0.
 \]
 Hence $\rho''>0$ on $(-\infty,r_0)$. 
 Since 
 \[
 \int_{-\infty}^{r_0}
\rho'(t)dt=\rho(r_0)<+\infty,
 \]
 $\rho'(r)\to0$ as $r\to-\infty$.
By \eqref{3.2}, $\rho''\to+\infty$ as $r\to-\infty$, which yields $\rho''>1$ on $(-\infty,r_{-\ell_3})$ for $r_{-\ell_3}<r_{-\ell_2}$.
For any $a<r_{-\ell_3}$, we have
\[
\rho'(r_{-\ell_3})-\rho'(a)
=\int_{a}^{r_{-\ell_3}}\rho''(t)dt
>r_{-\ell_3}-a.
\]
Letting $a\to-\infty$ and using $\rho'(a)\to0$, we obtain a contradiction.
Therefore, there exists no $r_0$, and we have $\rho''>0$ or $\rho''<0$ on $(-\infty,r_{-\ell_2})$. If $\rho''>0$, the same argument shows a contradiction.
If $\rho''<0$, then the positivity of $\rho$ yields a contradiction.
Therefore, it cannot occur that $\rho(r)\to0$ as $r\to-\infty$.
This completes the proof of Claim \ref{rhor-infty}.

\end{proof}
By Claim \ref{rhor-infty}, we obtain $\eta(r)\to+\infty$ as $r\to-\infty$.
This completes the proof of Claim \ref{etar-infty}.
\end{proof}

Combining Claim \ref{etar-infty}, \eqref{rhopbounded}, and \eqref{etadef}, we have $\rho(r)\to+\infty$ as $r\to-\infty$.
Hence for some $r_{-\ell}$,
\[
\rho>\sqrt{\frac{\bar R}{\lambda}}
\]
on an interval $(-\infty,r_{-\ell})$.
If there exists $r_0\in(-\infty,r_{-\ell})$ such that $\rho'(r_0)=0$, then by \eqref{3.2}, we have $\rho''(r_0)<0$. Therefore $\rho'>0$ on $(-\infty,r_0)$ and 
\[
\rho(r)=-\int_r^{r_0}\rho'(t)dt+\rho(r_0)<\rho(r_0).
\]
Taking $r\to-\infty$, we have a contradiction and hence there exists no zero of $\rho'$ on $(-\infty,r_{-\ell})$. This yields that 
\[
\rho'>0\quad \text{or} \quad \rho'<0
\]
on $(-\infty,r_{-\ell})$.
If $\rho'>0$, then the same argument yields a contradiction.
Hence $\rho'<0$ on $(-\infty,r_{-\ell})$.

Set $s=-r$ and $s_\ell=-r_{-\ell}$ and
$G(\rho)=\dot\rho$. By \eqref{rhopbounded}, 
\[
0
<G(\rho)
<\lambda
\]
on $(s_\ell,+\infty)$.
The same calculation as before shows 
\[
\frac{d}{d\rho}G(\rho)
=\frac{1}{2(n-1)\rho G(\rho)}
\Big\{
\bar R+\rho^2(G(\rho)-\lambda)
-(n-1)(n-2)G^2(\rho)
\Big\}.
\]
For any positive constant $\varepsilon<\lambda$, if $G(\rho)\leq\lambda-\varepsilon$ for sufficiently large $\rho$, then 
\begin{align*}
\frac{d}{d\rho}G(\rho)
&\leq 
\frac{1}{2(n-1)\rho G(\rho)}
\Big\{
\bar R-\varepsilon\rho^2
-(n-1)(n-2)G^2(\rho)
\Big\}\\
&\leq 
\frac{1}{2(n-1) G(\rho)}
\left\{
-\varepsilon\rho
+\frac{\bar R}{\rho}
\right\}\\
&\leq
-\frac{\varepsilon \rho}{4(n-1) G(\rho)}\\
&<-\frac{\varepsilon \rho}{4(n-1) \lambda}.
\end{align*}
Thus, if $G$ reaches or falls below $\lambda-\varepsilon$, it remains below this level for all larger $\rho$. Integrating both sides of the inequality, we have
\begin{align*}
0
&<G(\rho)\\
&<\int^\rho-\frac{\varepsilon t}{4(n-1) \lambda}dt+C\\
&=-\frac{\varepsilon}{8(n-1)\lambda}\rho^2+C.
\end{align*}
Taking $s\to+\infty$, we have a contradiction.
Hence, $\dot\rho\to\lambda$ as $s\to+\infty$. 

Therefore,
\[
\rho\to+\infty\quad\text{and}\quad\rho'\to-\lambda
\]
as $r\to-\infty$.
\\

\noindent
Case 1. $\bar R_\lambda\leq0$.

We have
\[
\frac{d}{d\rho}G(\rho)
=\frac{1}{2(n-1)\rho G(\rho)}
\Big\{
\bar R_\lambda
+(\lambda-G(\rho))
\Big(
(n-1)(n-2)(\lambda+G(\rho))-\rho^2
\Big)
\Big\}.
\]
Since $\bar R_\lambda\leq0$ and $G(\rho)<\lambda$, if $\rho$ is large enough, we have
\[
\frac{d}{d\rho}G(\rho)
<0,
\]
which is a contradiction because $G(\rho)<\lambda$ and $G(\rho)\to\lambda$ as $\rho\to+\infty$.

Therefore, there exists no nonconstant solution and 
\[
\rho\equiv \sqrt{\frac{\bar R}{\lambda}}.
\]

\noindent
Case 2. $\bar R_\lambda>0$.

Set 
\[
\theta_\gamma(\rho)=-\lambda+\frac{\gamma}{\rho^2}
\]
and
\[
\Theta(\theta_\gamma)
=2(n-1)\rho\theta_\gamma\frac{d\theta_\gamma}{d\rho}
+(n-1)(n-2)\theta_\gamma^2
+\rho^2(\theta_\gamma+\lambda)
-\bar R.
\]
If $\rho'=\theta_\gamma$, we have
\begin{align}\label{rp-t}
\frac{d}{dr}&(\rho'-\theta_\gamma)\\
&=-\frac{1}{2(n-1)\rho}
\Big\{
-\bar R+\lambda\rho^2+\theta_\gamma\rho^2+(n-1)(n-2)\theta_\gamma^2+2(n-1)\rho\theta_\gamma\frac{d\theta_\gamma}{d\rho}\Big\}\notag\\
&=-\frac{1}{2(n-1)\rho}\Theta(\theta_\gamma).\notag
\end{align}
Since 
\[
\frac{d\theta_\gamma}{d\rho}=-2\gamma\rho^{-3},
\]
we have
\begin{align*}
\Theta(\theta_\gamma)
&=2(n-1)(4-n)\lambda\gamma\rho^{-2}
+(n-1)(n-6)\gamma^2\rho^{-4}\\
&\qquad
+(n-1)(n-2)\lambda^2+\gamma-\bar R.
\end{align*}
Take a sufficiently large $\rho_\ell$. For $\rho\geq\rho_\ell$ we have
\begin{align}\label{Theta1}
\Theta\left(\theta_\frac{\bar R_\lambda}{2}\right)
&=(n-1)(4-n)\lambda\bar R_\lambda\rho^{-2}
+\frac{(n-1)(n-6)}{4}\bar R_\lambda^2\rho^{-4}
-\frac{1}{2}\bar R_\lambda<0
\end{align}
and
\begin{equation}\label{Theta2}
\Theta\left(\theta_{2\bar R_\lambda}\right)
=4(n-1)(4-n)\lambda\bar R_\lambda\rho^{-2}
+4(n-1)(n-6)\bar R_\lambda^2\rho^{-4}
+\bar R_\lambda>0.
\end{equation}
Furthermore, by the definition of $\theta_\gamma$, for $\rho\geq\rho_\ell$,
\begin{equation}\label{p34}
-\lambda<\theta_{\frac{\bar R_\lambda}{2}}<\theta_{2\bar R_\lambda}<-\frac{\lambda}{2}.
\end{equation}
We consider the initial time $\tilde r_0$ such that $\rho(\tilde r_0)>\rho_\ell$
and 
\[
\theta_{\frac{\bar R_\lambda}{2}}(\rho(\tilde r_0))
\leq
\rho'(\tilde r_0)
\leq
\theta_{2\bar R_\lambda}(\rho(\tilde r_0)).
\]
On the lower and upper boundary curves, respectively, 
%Assume that $\tilde r_1>\tilde r_0$ is the first time at which $\rho(\tilde r_1)=\rho_\ell$.
\eqref{rp-t}, \eqref{Theta1}, and \eqref{Theta2} give
\begin{align*}
&\frac{d}{dr} 
\left(
\rho'-\theta_{\frac{\bar R_\lambda}{2}}
\right)
=-\frac{1}{2(n-1)\rho}\Theta\left(\theta_{\frac{\bar R_\lambda}{2}}\right)>0,\\
&\frac{d}{dr} 
\left(
\rho'-\theta_{2{\bar R_\lambda}}
\right)
=-\frac{1}{2(n-1)\rho}\Theta\left(\theta_{2{\bar R_\lambda}}\right)
<0.
\end{align*}
Hence the solution remains between the two boundary curves as long as $\rho\geq\rho_\ell$.
For each $L>\rho_\ell$, we consider a solution 
\[
(u_L(t),v_L(t))
\]
with initial condition 
\[
u_L(0)=L
\]
and
\[
v_L(0)=-\lambda+\frac{\bar R_\lambda}{L^2}.
\]
Here $u_L$ solves \eqref{3.2}, and $v_L=u_L'$.
At a finite time $T_L>0$,
\[
u_L(T_L)=\rho_\ell.
\]
Set $r=t-T_L$, 
\[
x_L(r)=u_L(r+T_L)
\]
and 
\[
y_L(r)=v_L(r+T_L).
\]
Then 
\[
x_L(0)=u_L(T_L)=\rho_\ell.
\]
For $-T_L\leq r\leq0$, the trapping estimates give 
\[
\theta_{\frac{\bar R_\lambda}{2}}(x_L(r))
\leq
y_L(r)
\leq
\theta_{2\bar R_\lambda}(x_L(r)).
\]
Hence 
\begin{equation}\label{p39.1}
L-\rho_\ell
=\int_{-T_L}^0-y_L(r)dr.
\end{equation}
Combining this with \eqref{p34}, we have
\begin{equation}\label{p39.2}
\lambda>-y_L>\frac{\lambda}{2}.
\end{equation}
By \eqref{p39.1} and \eqref{p39.2}, we have
\[
L-\rho_\ell
<\lambda T_L
\] 
and
\[
L-\rho_\ell
>
\frac{\lambda}{2} T_L.
\]
Therefore, we obtain
\[
\frac{L-\rho_\ell}{\lambda}
<T_L
<\frac{2(L-\rho_\ell)}{\lambda}.
\]
Taking $L\to+\infty$, we have $T_L\to\infty$.
Therefore, if we fix the time 0 at which $\rho=\rho_\ell$, the starting time $-T_L$ goes to $-\infty$. 

By passing to a subsequence along which the initial slopes $y_L(0)$ converge and using continuous dependence of ODE solutions on initial data,
we obtain a positive solution defined for all $r\leq0$.
Integrating \eqref{p39.2}, we obtain 
\[
-\lambda r+\rho_\ell\geq x_L(r)\geq -\frac{\lambda}{2}r+\rho_\ell,
\]
which yields that the solution cannot blow up in finite time.

Since $\bar R_\lambda>0$, if there exists 
$r_{-\lambda}$ such that $\rho'=-\lambda$, 
\[
\rho''=\frac{\bar R_\lambda}{2(n-1)\rho}>0
.\]
Hence, $\rho'>-\lambda$ throughout its forward interval of existence. 
Furthermore, since $\eta(r)\leq\eta(0)$ and \eqref{etaboundedbelow} holds, $\rho$ is bounded from above.

\begin{claim}
The function $\rho$ is bounded away from zero on its forward interval of existence.
\end{claim}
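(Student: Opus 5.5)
The plan is to exploit the ODE \eqref{3.2} in the regime where $\rho$ is small, together with the lower bound $\rho'>-\lambda$ on the forward interval of existence already established just before the claim. No monotone quantity such as $\eta$ is needed for this part. The key observation is that, because $\bar R_\lambda>0$, the right-hand side of
\[
\rho''=\frac{\bar R-\lambda\rho^2-\rho'\rho^2-(n-1)(n-2)\rho'^2}{2(n-1)\rho}
\]
is uniformly positive, of size $\gtrsim 1/\rho$, whenever $\rho$ is small and $-\lambda<\rho'\le 0$.

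\textbf{Step 1 (a small-$\rho$ barrier).} First I fix $\delta>0$ with $\delta<\sqrt{\bar R/\lambda}$ and $\lambda\delta^2\le \bar R_\lambda/2$. Suppose $\rho(r)\le\delta$ and $-\lambda<\rho'(r)\le0$. Then $-\rho'\rho^2\ge0$ and $(n-1)(n-2)\rho'^2<(n-1)(n-2)\lambda^2$. Hence the numerator above is at least $\bar R_\lambda-\lambda\delta^2\ge \bar R_\lambda/2$. This gives
\[
\rho''\ge \frac{c}{\rho},\qquad c=\frac{\bar R_\lambda}{4(n-1)}>0 .
\]
In particular, any zero of $\rho'$ in $\{\rho\le\delta\}$ is a strict local minimum of $\rho$.

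\textbf{Step 2 (energy estimate along a descending excursion).} Consider a maximal forward interval on which $\rho\le\delta$ and $\rho'<0$. Such an interval either begins at a point where $\rho=\delta$, or at the initial time $0$ if $\rho(0)\le \delta$; in the latter case I replace $\delta$ by $\rho(0)$. On this interval $\rho$ is strictly decreasing, so I parametrize by $\rho$ and set $H(\rho)=\rho'^2$. Then
\[
\frac{dH}{d\rho}=2\rho''\ge \frac{2c}{\rho}.
\]
Integrating downward from the entry level $\delta$ to a level $\rho<\delta$ gives
\[
\rho'^2\le \lambda^2-2c\log\frac{\delta}{\rho}.
\]
Here I used $\rho'^2<\lambda^2$ at entry, since $-\lambda<\rho'\le0$. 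The right-hand side becomes negative once $\rho<\delta e^{-\lambda^2/(2c)}$. Therefore the descending excursion must end, with $\rho'=0$, before $\rho$ reaches $\rho_*:=\delta e^{-\lambda^2/(2c)}$. The same bound also rules out reaching $\rho=0$ in finite time, because $\rho$ cannot get below $\rho_*$ while still decreasing.

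\textbf{Step 3 (conclusion).} After such an excursion, Step 1 shows that $\rho$ has a local minimum and $\rho'$ becomes positive. For $\rho$ to go lower later, it must first come back up to level $\delta$ and then start a new descending excursion from level $\delta$ with $\rho'\in(-\lambda,0]$. Step 2 then applies again with the same constants. Hence
\[
\rho\ge \min\{\rho(0),\delta\}\,e^{-\lambda^2/(2c)}>0
\]
on the whole forward interval of existence, which proves the claim.

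The main obstacle is making sure the lower bound $\rho'>-\lambda$ really holds at every entry into the small region. That bound was derived just before the claim from $\bar R_\lambda>0$ via the sign of $\rho''$ at $\rho'=-\lambda$. I would double-check that this argument is valid on the whole forward interval and not only for $r\le0$. If it were not, the fallback would be to bound $\rho'^2$ at entry using $\eta(r)\le\eta(0)$. Since $\tilde\eta(\delta)$ is bounded, this gives $\rho^{n-2}\rho'^2\le \eta(0)-\tilde\eta(\delta)$ at $\rho=\delta$. That bounds $\rho'^2$ at entry, and Step 2 then runs with $\lambda^2$ replaced by this constant.
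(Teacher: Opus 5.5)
Your proposal is correct and follows essentially the paper's argument: the same lower bound $\rho''\geq \bar R_\lambda/(4(n-1)\rho)$ on the region $\rho\le\delta$, $-\lambda<\rho'\le 0$, integrated along each descending stretch with $\rho$ as the parameter to get a logarithmic lower bound. The paper integrates $d\rho'/d\rho=\rho''/\rho'\le -\bar R_\lambda/(4(n-1)\lambda\rho)$ instead of your $d(\rho'^2)/d\rho=2\rho''$, which only changes the constant in the exponent. Your worry about $\rho'>-\lambda$ is unnecessary: at any point with $\rho'=-\lambda$ one has $\rho''=\bar R_\lambda/(2(n-1)\rho)>0$, so $\rho'$ cannot reach $-\lambda$ anywhere on the forward interval, exactly as the paper notes just before the claim.
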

\begin{proof}
Set 
\[
0<\delta<\rho_\ell,\quad\text{and}\quad \delta^2\leq\frac{\bar R_\lambda}{2\lambda}.
\]
For 
$\rho\leq\delta$ and 
$-\lambda<\rho'\leq0$,
\begin{align*}
\rho''
&>\frac{1}{2(n-1)\rho}
\left\{
\bar R_\lambda-(\lambda+\rho')\rho^2
\right\}\\
&\geq\frac{1}{2(n-1)\rho}
\left\{
\bar R_\lambda-(\lambda+\rho')\delta^2
\right\}\\
&\geq\frac{\bar R_\lambda}{4(n-1)\rho}
\left(
1-\frac{\rho'}{\lambda}
\right)>0.
\end{align*}

On each interval where $0<\rho\leq\delta$ and $\rho'<0$, we regard $\rho'$ as a function of $\rho$. On each such interval, we have
\begin{align*}
\frac{d\rho'}{d\rho}
&\leq\frac{\bar R_\lambda}{4(n-1)\rho\rho'}\\
&\leq-\frac{\bar R_\lambda}{4(n-1)\lambda\rho}.
\end{align*}
Integrating the above inequality, we obtain
\begin{align*}
\rho'(\delta)-\rho'(\rho)
&\leq\int_{\rho}^\delta-\frac{\bar R_\lambda}{4(n-1)\lambda}t^{-1}dt\\
&=-\frac{\bar R_\lambda}{4(n-1)\lambda}\ln\frac{\delta}{\rho}.
\end{align*}
Hence we have
\[
\rho'(\rho)
\geq-\lambda+\frac{\bar R_\lambda}{4(n-1)\lambda}\ln\frac{\delta}{\rho}.
\]
A straightforward computation shows that 
\[
\rho\geq\delta\exp\left(
-\frac{4(n-1)\lambda^2}{\bar R_\lambda}\right).
\]
For $\rho<\delta,$ if $\rho'=0$, then by \eqref{3.2}, $\rho''>0$.
Hence, we can apply the estimate on each interval on which $\rho'\leq0$.
Therefore,
\[
\rho(r)
\geq\delta\exp
\left(
-\frac{4(n-1)\lambda^2}{\bar R_\lambda}
\right)>0
\]
throughout the forward interval of existence.
\end{proof}
By definition of $\eta$, \eqref{etap}, and \eqref{etaboundedbelow}, we have
\[
\rho'^2\leq(\eta(0)-\tilde \eta_{\min})\rho^{-(n-2)}.
\]
Hence $(\rho(r),\rho'(r))$ remains in a compact subset of $\{(x,y)\in\mathbb{R}^2~|~x>0\}$.
Therefore, the solution extends to all $r\geq0$.
By \eqref{etap} and \eqref{etaboundedbelow}, there exists $\eta^+$ such that
\[
\eta(\rho(r),\rho'(r))\to\eta^+
\]
as $r\to+\infty$.
Since the trajectory remains in the compact subset throughout its forward interval of existence, for any sequence $t_k\to+\infty$, we can take a subsequence such that
\[
(\rho(t_k),\rho'(t_k))\to(\rho^+,\rho^{++}),\qquad \rho^+>0.
\]
We consider the solution $(u(r),v(r))$ such that 
\[
(u(0),v(0))=(\rho^+,\rho^{++}).
\]
By continuous dependence on initial data, on a small interval $[0,\tau]$,
\[
(\rho(t_k+r),\rho'(t_k+r))\to(u(r),v(r)).
\]
Hence for each $r$, 
\[
\eta(u(r),v(r))=\lim_{k\to+\infty}\eta(\rho(t_k+r),\rho'(t_k+r))
=\eta^+.
\]
Along this solution, $\eta$ is constant.
By \eqref{etap}, $v=0$. By \eqref{3.2} we also have $u=\sqrt{\frac{\bar R}{\lambda}}$.
Hence 
\[
(\rho^+,\rho^{++})=
\left(
\sqrt{\frac{\bar R}{\lambda}},0
\right).
\] 
Since the trajectory remains in a compact subset and has only one accumulation point, we obtain
\[
\rho\to\sqrt{\frac{\bar R}{\lambda}},\quad\rho'\to0
\]
as $r\to+\infty$.

Finally, we show uniqueness.

As we argued before, any nonconstant positive solution on $\mathbb{R}$ satisfies 
\[
\rho\to+\infty,\quad\rho'\to-\lambda
\]
as $r\to-\infty$.

Let $\rho_1$ and $\rho_2$ be two nonconstant positive solutions of $\eqref{3.2}$ defined on $\mathbb{R}$. For sufficiently large $\rho$, let $r_i(\rho)$ denote the inverse of $\rho_i$ on its decreasing interval near $r=-\infty$, and define
\[
p_i(\rho)=\rho'_i(r_i(\rho)),\quad
\rho_i(r_i(\rho))=\rho.
\]
Set $P=p_1-p_2$. A straightforward computation shows
\begin{align}\label{Pp}
\frac{dP}{d\rho}
&=\left\{
-\frac{n-2}{2\rho}
-\frac{\bar R}{2(n-1)\rho p_1p_2}
+\frac{\lambda\rho}{2(n-1)p_1p_2}
\right\}P.
\end{align}
Set 
\[
A(\rho)=-\frac{n-2}{2\rho}
-\frac{\bar R}{2(n-1)\rho p_1p_2}
+\frac{\lambda\rho}{2(n-1)p_1p_2}.
\]
Since 
\[
\frac{A(\rho)}{\rho}\to\frac{1}{2(n-1)\lambda}>0
\]
as $\rho\to\infty$, for sufficiently large $\rho_{++}$ and some positive constant $c>0$,
we obtain
\begin{equation}\label{Acr}
A(\rho)\geq c\rho
\end{equation}
for $\rho\geq \rho_{++}$.
Assume that there exists $\rho_0$ such that $P(\rho_0)\neq0$.
By \eqref{Pp},
\[
P(\rho)=P(\rho_0)\exp\left(\int_{\rho_0}^\rho A(t)dt
\right).
\]
Hence for any $\rho\geq\rho_0$, $P(\rho)\neq0$.
Without loss of generality we may assume that $P>0$ for $\rho>\rho_{++}$.
By \eqref{Pp} and \eqref{Acr}, we obtain
\begin{align*}
\ln P(\rho)-\ln P(\rho_{++})
&=\int_{\rho_{++}}^\rho A(t) dt\\
&\geq c\int_{\rho_{++}}^\rho t dt\\
&=\frac{c}{2}(\rho^2-\rho^2_{++}).
\end{align*}
Therefore, we obtain
\[
P(\rho) \geq P(\rho_{++})\exp\left(\frac{c}{2}(\rho^2-\rho^2_{++})\right).
\]
As $\rho\to+\infty$, the left-hand side of the above inequality tends to zero.
However, the right-hand side of the above inequality tends to infinity, which is a contradiction.
Therefore $p_1\equiv p_2$.

By uniqueness of solutions to the ODE, the solutions coincide up to translation.

Let $(N,\bar g)$ be a complete Riemannian manifold with constant scalar curvature $\bar R$. Then the metric  
$
g=dr^2+\rho^2\bar g,
$
together with the potential function
$\quad F(r)=F(0)+\int_0^r\rho(t)dt$,
defines a complete shrinking gradient Yamabe soliton.
Furthermore,
\[
0<R<\lambda+\sqrt{\frac{\bar R}{(n-1)(n-2)}}.
\]
Furthermore, 
\[
\lim_{r\to-\infty}R=0, \quad \lim_{r\to+\infty} R=\lambda.
\]
\end{proof}

%\section*{Data availability statement} 
%Data sharing is not applicable to this article because no datasets were generated or analyzed during the current study.

\section*{Acknowledgments}
The author would like to express deep gratitude to Beomjun Choi for his useful comments and suggestions.

\end{document}